\def\NAT@def@citea{\def\@citea{\NAT@separator}}
\theoremstyle{plain}
\newtheorem{theorem}{Theorem}[section]
\newtheorem{lem}[theorem]{Lemma}
\newtheorem{prop}[theorem]{Proposition}
\newtheorem{ass}[theorem]{Assumption}
\theoremstyle{definition}
\newtheorem{deff}[theorem]{Definition}
\theoremstyle{remark}
\newtheorem{rem}{Remark}
\newcommand{\eps}{\varepsilon}
\newcommand{\ups}{\upsilon}
\newcommand{\hh}{\mathcal{H}}
\newcommand{\as}{\mathcal{A}}
\newcommand{\ls}{\mathcal{L}}
\newcommand{\vs}{\mathcal{V}}
\newcommand{\os}{\mathcal{O}}
\newcommand{\hs}{\mathcal{H}}
\newcommand{\R}{\mathbb{R}}
\newcommand{\ppp}{\mathbf{P}}
\newcommand{\eee}{\mathbf{E}}
\newcommand{\qf}{\mathfrak{q}}
\newcommand{\cf}{\mathfrak{c}}
\newcommand{\ef}{\mathfrak{er}}
\newcommand{\pf}{\mathfrak{p}}
\newcommand{\tu}{\tilde{u}}
\newcommand{\dom}{\operatorname{dom}}
\newcommand{\zbar}{\bar{z}}
\newcommand{\pk}{\mathfrak{p}}
\newcommand{\yk}{\mathfrak{y}}
\newcommand{\K}{\operatorname{K}}
\newcommand{\trans}{\mathcal{T}}
\newcommand{\Qs}{\mathcal{Q}}
\newcommand{\tr}{\operatorname{tr}}
\newcommand{\LL}{\mathcal{L}}
\newcommand{\LLk}{\mathfrak{L}}
\newcommand{\ra}{\rightarrow}
\newcommand{\hk}{\mathfrak{h}}
\newcommand{\lle}{\LLk^{\qf,\eps}}
\newcommand{\Ws}{\mathcal{W}}
\newcommand{\G}{\mathfrak{G}}
\newcommand{\zero}{\mathbf{0}}
\newcommand{\hha}{\hh_{\alpha,s}}
\newcommand{\ym}[1]{{\color{black} #1}}
\begin{document}


\title{Almost Sure Asymptotic Stability of Parabolic SPDEs with Small Multiplicative Noise} 

\author{
\name{Yiming Meng\textsuperscript{a}, N.Sri.Namachchivaya\textsuperscript{a}\thanks{CONTACT Yiming Meng. Email: yiming.meng@uwaterloo.ca, ymmeng@illinois.edu.}, and Nicolas Perkowski\textsuperscript{b}}
\affil{\textsuperscript{a}Department of Applied Mathematics,  University of Waterloo,  ON, Canada; \\\textsuperscript{b}Institut f{\"u}r Mathematik, Freie Universit{\"a}t Berlin, Berlin, DE.}
}

\maketitle

\begin{abstract}
A better understanding of the instability margin  will eventually optimize the 
operational range for safety-critical industries. In this paper, we investigate the almost-sure exponential asymptotic stability of the trivial solution of a  parabolic semilinear  stochastic partial differential equation (SPDE) driven by multiplicative noise near the deterministic Hopf bifurcation point. We show the existence and uniqueness of the invariant measure under appropriate assumptions, and approximate the exponential growth rate via asymptotic expansion, given that the strength of the noise is small.  This approximate quantity can readily serve
as a robust indicator of the change of almost-sure stability. 
\end{abstract}

\begin{keywords}
Multiplicative noise; SPDE; almost-sure asymptotic stability; Hopf bifurcation; top Lyapunov exponent; multiscale approximation. 
\end{keywords}

\section{Introduction}
The almost-sure asymptotic stability/instability of the trivial solution is captured by the sign of the top Lyapunov exponent $\lambda$ of the linearized system. In the contexts of stochastic bifurcation, under proper conditions, for finite-dimensional SDEs with coefficients dependent on some parameter $\gamma$, if $\gamma$ varies in a way that $\lambda(\gamma)$ changes sign from negative to positive, the trivial solution loses its almost-sure asymptotic stability and a nontrivial invariant measure is formed. This stochastic Hopf bifurcation is investigated by \cite{baxendale1994stochastic}. 

In terms of approximations, for systems driven by multiplicative noise, multiscale analysis and stochastic averaging/homogenization techniques have been applied to dimensional reduction problems of noisy nonlinear
systems with rapidly oscillating and decaying components. For finite-dimensional SDEs with small multiplicative noise, when  many modes are ‘heavily damped’, reduced-order
models were obtained using a martingale problem
approach  in \cite{namachchivaya2004averaging}. The result verifies that  a lower dimensional Markov process characterizes the limiting behavior in the weak topology as the noise
becomes smaller.  In application, the authors of \cite{singh2016stochastic} derived a low-dimensional
approximation of an
11-dimensional nonlinear stochastic aeroelastic problem near 
a deterministic Hopf bifurcation, with one critical mode and
several stable modes. The reduced model performs well in terms of simulating the distribution, density, as well as the top Lyapunov exponent of the full system near the deterministic Hopf bifurcation.

In contrast to the finite-dimensional cases, it is difficult to quantitatively describe the random invariant manifolds and stochastic bifurcations for SPDEs driven by multiplicative noise  \cite{guo2016approximation,sun2010impact,li2018stochastic,chekroun2014stochastic}. However, it is worth mentioning that there are several results regarding the approximation of the transient dynamics within the critical manifold on a sufficiently long time scale. 
For example, the work presented in~\cite{blomker2003amplitude,blomker2020impact,blomker2007multiscale}  describes the phenomenological bifurcation (defined in \cite{arnold1996toward}) using amplitude equations and multi-scale analysis techniques. A more rigorous analysis of the approximation of the invariant measure was developed in \cite{blomker2004multiscale}. In terms of stability for infinite-dimensional cases, the almost-sure stability of scalar stochastic delay differential equations have been studied in \cite{sri2012almost}. 

The recent work \cite{blomker2020impact} studies the impact of multiplicative noise in semilinear {SPDE}s near the bifurcation using amplitude equations. However, the hypothesis only guarantees a local existence of a mild solution to the {SPDE}s up to a random explosion time. Under a stronger dissipative assumption in the critical modes, the derived amplitude equation can be used to approximate the solution up to a fixed deterministic time and, hence, predict the stability of the original {SPDE}. The approximation error  converges in probability.

Unlike \cite{namachchivaya2004averaging,singh2016stochastic,blomker2020impact}, we  do not attempt to use averaging/homogenization and dimension reduction to study the structural change of the invariant measures.  In this paper, as a necessary step to study the dynamical bifurcation (defined in \cite{arnold1996toward}) in infinite dimensions, we aim to investigate the almost-sure asymptotic stability of the trivial solutions of SPDEs driven by multiplicative noise. In particular, we study the effect of multiplicative noise near the Hopf bifurcation point (quasi-static) $\gamma_c$  of the unperturbed system. While it remains challenging to obtain the exact expressions for top Lyapunov exponents,  
we  extend the multiscale analysis in \cite{sri2012almost} and find the asymptotic approximation of them. To be more precise, the multiscale analysis is conducted in the neighbourhood of the deterministic bifurcation point $\gamma_c$, i.e., $\gamma=\gamma_c+\eps^2\qf$ with some $\qf\in\R$ and $\eps\ll 1$.
We directly consider 
linear (linearized) SPDEs of the following general form

\begin{equation}\label{E: abstract}
 du(t)=\as(\gamma)u(t)dt+\eps G(u(t))dW(t),
\end{equation}
where $u(t)$ takes value in an infinite-dimensional separable Hilbert space $\hh=L^2(E)$ for some bounded $E\subseteq\R^n$. The self-adjoint
unbounded linear $\as(\gamma)$ that also depends on a parameter $\gamma\in\R$  generates an analytic compact $C_0$ semigroup on $\hh$. The operator $G(u)$ is Hilbert-Schmidt  with $G(\zero)=\zero$. The noise $W$ is a cylindrical Wiener process with intensity of $\eps$.
 
It is worth noting that even a small intensity of stochastic perturbations from the stable modes will shift  both  dynamical and
phenomenological bifurcation points. The accuracy of the approximation matters in safety-critical industries. Upon improving the approximation precision, as we will show in this paper, even though the stable modes quickly decay and can be ignorable, we do not cancel the coupling effects in the multiplicative noise  so as to keep  interactions between the critical and stable modes as in the original dynamics. A closely related work \cite[Chapter 6-8]{meng2022bifurcation} has shown that 
such a coupling has a slight impact on the D-bifurcation point
as well as on the P-bifurcation point via second-order corrections.   
We also pursue in our future work to use this  approximation scheme of the top Lyapunov exponents  along with the nonlinear semilinear SPDEs  to analyze the structural changes in random attractors as the trivial solution loses its stability.

The rest of this paper is organized as follows. 
 In Section \ref{sec: intro2}, we introduce the notations and formulate necessary assumptions for the  analysis. In Section \ref{sec: stability}, we investigate the almost-sure asymptotic stability of the trivial solution. In particular, we derive the Furstenberg–Khasminskii formula for the top Lyapunov exponent and show the existence and uniqueness of the invariant measure under appropriate conditions.  
We approximate the exponential growth rate via asymptotic expansion in Section \ref{sec: lya_approx}.
The conclusions follow in
Section \ref{sec: conclusions}.

\section{Notations and Main Assumptions}\label{sec: intro2}

Given the separable Hilbert space $\hh$, we denote by $\langle\cdot,\;\cdot\rangle$ the inner product in $\hh$ and by $\|\cdot\|$ the norm. We also identify $\hh$ with its dual through
the Riesz isomorphism. Due to the compactness of the associated semigroup, $\as(\gamma)$ has a purely discrete point
spectrum. We make the following assumptions on $\as(\gamma)$  about the point spectrum.
\begin{ass}
\label{ass: A1-2}
We assume that, for all $\gamma$,
\begin{enumerate}
    \item[(1)] the point spectrum $\{\rho_{ k}(\gamma)\}_{k\in \mathbb{Z}_0}$ of $\as(\gamma)$ is complex, where $\rho_{ k}(\gamma):=a_k(\gamma)+ib_k(\gamma)\in\mathbb{C}$ and $\rho_{ -k}(\gamma)=\overline{\rho}_k(\gamma)$ for all $k\in\mathbb{Z}_0$;
   
    \item[(2)] $\rho_k(\gamma)$ is analytic
    in $\gamma$ for all $\gamma\in\R$ and all $k\in\mathbb{Z}_0$,
  and $a_1(\gamma)> a_2(\gamma)\geq \dots\geq a_k(\gamma)\geq \dots$;
   \item[(3)] the corresponding eigenvectors $\{e_k\}_{k\in \mathbb{Z}_0}$ form a complete orthonormal basis of $\mathcal{H}$ such that $\as(\gamma) e_k=\rho_k(\gamma)e_k$,  $\langle e_{-k},e_k\rangle=1$ for all $k\in\mathbb{Z}_0$, and $\langle e_{i},e_j\rangle=0$ for all $i+j\neq 0$. 
\end{enumerate}
\end{ass}
\begin{rem}
Note that by Assumption \ref{ass: A1-2}, $\as u=\sum_{k\in\mathbb{Z}_0}\rho_k\langle e_{-k}, u\rangle e_k$ for all $u\in\hh$, and for all $u,v\in\hh$ we have
\begin{equation*}
    \begin{split}
        \langle \as u,v\rangle &=\sum_{k\in\mathbb{Z}_0}\rho_k\langle e_{-k}, u\rangle \langle e_k, v\rangle=\sum_{k\in\mathbb{Z}_0}\rho_{-k}\langle e_k, u\rangle \langle e_{-k}, v\rangle\\
        &=\sum_{k\in\mathbb{Z}_0}\rho_{k}\langle e_k, u\rangle \langle e_{-k}, v\rangle=\langle u,\as v\rangle,
    \end{split}
\end{equation*}
which indicates the self-adjoint property of $\as$.
The second to the last identity is in that, for each $k\neq 0$, 
$$\rho_{-k}\langle e_k, u\rangle \langle e_{-k}, v\rangle+\rho_{k}\langle e_{-k}, u\rangle \langle e_{k}, v\rangle=\rho_{k}\langle e_k, u\rangle \langle e_{-k}, v\rangle+\rho_{-k}\langle e_{-k}, u\rangle \langle e_{k}, v\rangle\in\R. $$
\end{rem}

With the appearance of noise that is white in time, either white or
colored in space, the regularity, especially the spatial differentiability, of the solution varies. We introduce the concept of fractional power space, which  renders a more flexible scale of  regularity.  

\begin{deff}[Fractional Power Space]\label{def: fractional-2}
For $\alpha\in \R$, given the linear operator $\as(\gamma)$, define the interpolation fractional power (Hilbert) space \cite{pazy2012semigroups} $\hh_{\alpha}:=\dom(\as^{\alpha}(\gamma))$ endowed with inner product $\langle u,v\rangle_{\alpha}= \langle \as^{\alpha}u,\as^{\alpha}v\rangle$ and corresponding induced norm $\|\cdot\|_{\alpha}:=\|\as^{\alpha}\cdot\|$.  Further more, we denote the dual space of $\hh_{\alpha}$ by $\hh_{-\alpha}$ w.r.t. the inner product in $\hh$.
\end{deff}

\begin{rem}
    Note that, for all $\alpha \geq \beta$, we have the inclusion $\hh_\alpha\subset \hh_\beta$. In addition, for any $\alpha>0$, we have $\hh_\alpha\subset \hh\subset \hh_{-\alpha}$.  We also kindly refer readers to \cite{hairer2009introduction, blomker2007multiscale, meng2023hopf} for more properties of the fractional power space.  
\end{rem}

Since $\gamma_c$ is the deterministic Hopf bifurcation point, we have $a_{\pm 1}(\gamma_c)=0$ as well as $a_{\pm 1}'(\gamma_c)\neq 0 $, $b_{\pm 1}(\gamma_c)\neq 0$, whilst the rest of the spectrum stays in the left half-plane. We introduce the shorthand notation $\hk:=e_1$ and $\bar{\hk}:=e_{-1}$ to denote the critical eigenvectors. We denote by $\hk^*$ and $\bar{\hk}^*$ the associated adjoint eigenvectors , which satisfy
\begin{equation*}
    \langle \hk^*,\hk\rangle =1,\;\langle \hk^*,\bar{\hk}\rangle =0,\;\langle \bar{\hk}^*,\bar{\hk}\rangle =1, \;\langle \bar{\hk}^*,\hk\rangle =0.
\end{equation*}
Due to the existence of spectral gap, we also introduce the projections and basic properties of  $\as(\gamma)$.
\begin{deff}\label{def: projection_multiplicative-2}
The 
critical projection operator is defined as
\begin{equation}
    P_c(\cdot):=\langle \hk^*,\cdot\;\rangle\hk+\langle\bar{\hk}^*,\cdot\;\rangle \bar{\hk}.
\end{equation}
The stable projection operator is $P_s=I-P_c$. For simplicity, we introduce shorthand notation $\as_c(\gamma):=P_c\as(\gamma)$. We define $\as_s(\gamma)$, $G_c$, $G_s$, $\hh_c$, $\hh_s$, $\hh_{\alpha, c}$ and $\hh_{\alpha, s}$ in a similar way. 

\end{deff}

\begin{deff}[Other notations for $\as(\gamma)$]\label{def: other_notation-2} To this end, we use
\begin{itemize}
\item[(1)]$\mathbb{Z}_c:=\{\pm 1\}$, $\mathbb{Z}_s:=\mathbb{Z}_0\setminus\{\pm 1\}$.
\item[(2)] $\as_c^\cf:=\as_c(\gamma_c)$, $\as_c^\qf:=\qf\as_c'(\gamma_c)$; the associated eigenvalues of $\as_c^\cf$ and $\as_c^\qf$ are respectively denoted by $$\rho_{c}^\cf=ib_{c}^\cf:=ib_{1}(\gamma_c),\;\;\bar{\rho}_{c}^\cf=-ib_{c}^\cf:=-ib_{1}(\gamma_c),$$  
$$\rho_{c}^\qf=a_{c}^\qf+ib_{c}^\qf:=\qf(a_{1}'(\gamma_c)+ib_{1}'(\gamma_c))$$
and
$$\bar{\rho}_{c}^\qf=a_{c}^\qf-ib_{c}^\qf:=\qf(a_{1}'(\gamma_c)-ib_{1}'(\gamma_c)).$$
\item[(3)] $\as_c^\ef:=\eps^{-2}[\as_c(\gamma_c+\eps^2\qf)-\as_c^\cf-\eps^2\as_c^\qf]$, the associated eigenvalues of $\as_c^\ef$  are  denoted as
$\rho_{c}^\ef$ and $\bar{\rho}_{c}^\ef $.
    \item[(4)] $\as_s:=\as_s(\gamma)$ for all $\gamma$ and $\as^\cf:=\as_s+\as_c^\cf$.
\end{itemize}

\end{deff}

\begin{rem}
We introduce the second-order expansion $\as_c^\ef$ of $\as_c(\gamma)$ around $\gamma_c$ to better understand the effect in the multiscale expansion when the parameter of the linear operator is close to the deterministic Hopf bifurcation point. 
\end{rem}

\begin{ass}\label{ass: A2-2}
We assume that, for each $\gamma\in\R$, 
$\as(\gamma)$ generates an analytic compact $C_0$ semigroup $\{e^{t\as(\gamma)}\}_{t\geq 0}$ on $\hh$, which also commute with the critical projection operator $P_c$. We further assume that 

\begin{itemize}
    \item[(1)] There exists some $M_s>0$ and $c_s>0$ such that 
for all $u\in\hh_s$,
$$\|e^{t\as(\gamma)}u\|\leq Me^{-c_st}\|u\|,\;\;\forall t\geq 0.$$
\item [(2)] There exists  $M>0$ and $c\geq 0$ such that  for all $u\in\hh$ and each $\gamma$, 
$$\|e^{t\as(\gamma)}u\|\leq Me^{ct}\|u\|,\;\;\forall t\geq 0. $$
\end{itemize}
\end{ass}

\begin{prop}\label{prop: er_bound-2}
For each $u\in\hh$ and for all $\eps\in(0,1)$, there exist some $C_\qf>0$ and $C_\ef>0$ such that
$$\langle \as_c^\qf u,u\rangle\leq C_\qf\|P_c u\|^2 \;\;\text{and}\;\;\langle \as_c^\ef u,u\rangle\leq \eps^2C_\ef\|P_c u\|^2.$$
\end{prop}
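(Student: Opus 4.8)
The plan is to observe that, by Assumptions~\ref{ass: A1-2} and~\ref{ass: A2-2}, both $\as_c^\qf$ and $\as_c^\ef$ are bounded finite-rank operators supported entirely on the two-dimensional critical subspace $\hh_c=\operatorname{span}\{\hk,\bar{\hk}\}$, so that each bound reduces to estimating a single $2\times 2$ quadratic form; the only extra work, for $\as_c^\ef$, is a uniform-in-$\eps$ control of its eigenvalue via a second-order Taylor expansion. To set this up, recall from Definition~\ref{def: projection_multiplicative-2} that $P_c$ is built from the (fixed) eigenvectors $\hk,\bar{\hk}$ of $\as(\gamma)$ and hence commutes with $\as(\gamma)$, so $\as_c(\gamma)=P_c\as(\gamma)=\as(\gamma)P_c$. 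Since $\as_c^\qf=\qf\as_c'(\gamma_c)$ and, by Definition~\ref{def: other_notation-2}, $\eps^2\as_c^\ef=\as_c(\gamma_c+\eps^2\qf)-\as_c(\gamma_c)-\eps^2\as_c^\qf$, both operators have range in $\hh_c$ and vanish on $\hh_s$. Moreover Assumption~\ref{ass: A1-2}(3) gives $\langle e_i,e_j\rangle=0$ for $i+j\neq0$, hence $\langle\hh_c,\hh_s\rangle=0$; so for any $u\in\hh$ the cross term drops and $\langle\as_c^\qf u,u\rangle=\langle\as_c^\qf P_cu,P_cu\rangle$, and likewise with $\as_c^\ef$.

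The next step is the $2\times 2$ computation. Write $P_cu=\xi\hk+\eta\bar{\hk}$ with $\xi=\langle\hk^*,u\rangle$ and $\eta=\langle\bar{\hk}^*,u\rangle$. Using $\as_c^\qf\hk=\rho_c^\qf\hk$, $\as_c^\qf\bar{\hk}=\bar\rho_c^\qf\bar{\hk}$ together with $\langle\hk,\hk\rangle=\langle\bar{\hk},\bar{\hk}\rangle=0$ and $\langle\hk,\bar{\hk}\rangle=\langle\bar{\hk},\hk\rangle=1$, one gets $\langle\as_c^\qf P_cu,P_cu\rangle=(\rho_c^\qf+\bar\rho_c^\qf)\,\xi\eta=2a_c^\qf\,\xi\eta$, while $\|P_cu\|^2=|\xi|^2+|\eta|^2$ (the critical eigenvectors being orthonormal for $\|\cdot\|$). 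The elementary estimate $2|\xi\eta|\le|\xi|^2+|\eta|^2$ then yields $\langle\as_c^\qf u,u\rangle\le|a_c^\qf|\,\|P_cu\|^2$, so $C_\qf:=|a_c^\qf|$ works (and is sharp for real $u$, where the skew part cancels and the value is exactly $a_c^\qf\|P_cu\|^2$, $a_c^\qf=\qf a_1'(\gamma_c)$); replace $C_\qf$ by $1$ if $a_c^\qf=0$.

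For $\as_c^\ef$ the same computation applies with $\rho_c^\ef$ in place of $\rho_c^\qf$, so it remains to bound $|\rho_c^\ef|$ uniformly for $\eps\in(0,1)$. By the above, $\eps^2\rho_c^\ef=\rho_1(\gamma_c+\eps^2\qf)-\rho_1(\gamma_c)-\eps^2\qf\,\rho_1'(\gamma_c)$, i.e.\ the second-order Taylor remainder at $\gamma_c$, evaluated at increment $\eps^2\qf$, of the curve $\gamma\mapsto\rho_1(\gamma)$, which is analytic on $\R$ by Assumption~\ref{ass: A1-2}(2). The integral form of the remainder gives $|\eps^2\rho_c^\ef|\le\tfrac12(\eps^2\qf)^2\sup_{|s|\le|\qf|}|\rho_1''(\gamma_c+s)|$, since for $\eps\in(0,1)$ the increment $\eps^2\qf$ lies in the fixed compact interval $[\gamma_c-|\qf|,\gamma_c+|\qf|]$, on which $|\rho_1''|$ is bounded by some finite $M_\qf$. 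Hence $|\rho_c^\ef|\le\tfrac12\qf^2 M_\qf\,\eps^2$, and the $2\times 2$ computation gives $\langle\as_c^\ef u,u\rangle\le|\operatorname{Re}\rho_c^\ef|\,\|P_cu\|^2\le\tfrac12\qf^2 M_\qf\,\eps^2\,\|P_cu\|^2$, so $C_\ef:=\max\{\tfrac12\qf^2 M_\qf,\,1\}$ works. I do not expect a genuine obstacle: the content is entirely the two structural observations that these operators are supported on $\hh_c$ and are $\langle\cdot,\cdot\rangle$-orthogonal to $\hh_s$, and that $\as_c^\ef$ is a rescaled second-order remainder of an analytic curve, hence $O(\eps^2)$ uniformly. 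The only point requiring a little care is the bookkeeping between the bilinear pairing $\langle\cdot,\cdot\rangle$ (under which $\langle\hk,\hk\rangle=0$) and the Hilbert norm $\|\cdot\|$ on the critical subspace.
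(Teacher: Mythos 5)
Your proof is correct and follows essentially the same route as the paper's: reduce the quadratic form to the critical subspace (the stable part and cross terms vanish by the biorthogonality of the eigenbasis), evaluate the resulting $2\times 2$ form through the eigenvalues $\rho_c^\qf,\bar\rho_c^\qf$ (resp.\ $\rho_c^\ef,\bar\rho_c^\ef$), and obtain the $\eps^2$ factor for $\as_c^\ef$ from the Taylor remainder of the analytic eigenvalue curve $\gamma\mapsto\rho_1(\gamma)$, exactly as the paper's appeal to the Cauchy--Taylor expansion. Your write-up merely makes explicit the remainder estimate and the distinction between the bilinear pairing and the Hilbert norm, which the paper leaves implicit.
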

\begin{proof}
It is clear from the definition of
projection that 
\begin{equation}
\begin{split}
    \langle \as_c^\qf u,u\rangle& =\langle \as_c^\qf (P_cu+P_su),P_cu+P_su\rangle=\langle \as_c^\qf (P_cu),P_cu\rangle\\
    &=\rho_c^\qf\langle \bar{\ups}, P_cu\rangle \langle \ups, P_cu\rangle+\bar{\rho}_c^\qf\langle \ups, P_cu\rangle \langle \bar{\ups}, P_cu\rangle\\
    &\leq 2|a_c^\qf|^2\cdot \|P_cu\|^2.
\end{split}
\end{equation}
The bound for  $\langle \as_c^\ef u,u\rangle$ can be obtained in a similar way with by the Cauchy-Taylor expansion based on the additional analyticity of $\rho_{\pm 1}(\gamma)$ as in (2) of Assumption \ref{ass: A1-2}.
\end{proof}

Now, we move on to the assumptions on the last term in \eqref{E: abstract}. Let $\ls_2(E,K)$ denote the set of all Hilbert-Schmidt operators from $E$ to $K$ for any separable Hilbert spaces $E$ and $K$. We write $\ls_2(E)$ instead of $\ls_2(E,E)$ for short. We denote by $\|\cdot\|_{\ls_2(E,K)}$  the norm for Hilbert-Schmidt operators. If the spaces $E$ and $K$ are not emphasized, we also use the shorthand notation $\|\cdot\|_{\ls_2}$.

\begin{ass}
Let $\vs$ be a separable Hilbert space with orthonormal basis $\{\mathfrak{z}_k\}_{k\in\mathbb{Z}_0}$. We assume $W$ is a $\vs$-valued cylindrical Wiener process. 
\end{ass}

\begin{ass}\label{ass: G-2}
Assume that, for each $\alpha\in(0,1]$ and for all $u\in\mathcal{H}_\alpha$, $G(u)$ is a  Hilbert-Schmidt operator from $\vs$ to $\hh_\alpha$, i.e.,
$G: \mathcal{H}_\alpha\rightarrow \ls_2(\vs,\hh_\alpha)$. 
We further require that $G(u)$ is Fr\'{e}chet-differentiable satisfying
\begin{enumerate}
    \item[(1)] there exists an $\ell_1>0$ s.t. $\|G(u)\|_{\ls_2(\vs,\hh_\alpha)}\leq \ell_1\|u\|_{\alpha}$ for all $ u\in\mathcal{H}_\alpha$;
    \item[(2)] there exists an $\ell_2>0$ s.t. $\|G'(u)\cdot v\|_{\ls_2(\vs,\hh_\alpha)}\leq \ell_2\|v\|_{\alpha}$  for all $ v\in\mathcal{H}_\alpha$;
    \item[(3)] $G''(u)=0$ for all $u\in \mathcal{H}$. 
\end{enumerate}
\end{ass}

\begin{rem}
Given an orthonormal basis $\{\mathfrak{z}_k\}_{k\in\mathbb{Z}_0}$ of  $\mathcal{V}$, we can write $$G(u)=\sum_{j\in\mathbb{Z}_0}\sum_{k\in\mathbb{Z}_0} g_{jk}(u)e_j\otimes \mathfrak{z}_k,$$ where $g_{ik}(u)\in\R$ is the eigenvalue of the operator $G(u)$ for all $u\in\hh_\alpha$ and for all $j,k\in\mathbb{Z}_0$.
\end{rem}

\section{Stability Analysis of the Trivial Solution} \label{sec: stability}

In this section, we investigate the almost-sure asymptotic stability of the trivial solution $\mathbf{0}$, or equivalently, the trivial invariant measure $\delta_{\mathbf{0}}$, using multiscale techniques. 
We recall that $\gamma=\gamma_c+\eps^2\qf$ with some unfolding parameter $\qf\in\R$, and introduce \ym{the notion of solution under re-scaled time} as follows, i.e. we introduce \ym{$$z(t)=\langle\hk^*,u(\eps^{-2}t)\rangle \;\;\text{and}\;\;\zbar(t)=\langle \bar{\hk}^*,u(\eps^{-2}t)\rangle$$ as the complex amplitudes of the critical mode 
and $y(t)= P_su(\eps^{-2}t)$}. Then the solution $u$ of \eqref{E: abstract} can be written as \ym{$$u(t)= z(\eps^2t)\hk+ \zbar(\eps^2t)\bar{\hk}+ y(\eps^2t).$$} We denote the real part and imaginary part of $z$ as $z_1=\operatorname{Re}(z)$ and $z_2=\operatorname{Im}(z)$, respectively.

Note that when the system is close to the critical point, 
due to the existence of the spectral gap, we decompose \eqref{E: abstract} into the re-scaled critical and fast-varying modes as follows:
\begin{equation*}
\begin{split}
& dz= \eps^{-2}\rho_c^\cf zdt + \rho_c^\qf  zdt+  \langle\hk^*,G(z\hk+\zbar\bar{\hk}+y)dW_t\rangle+ \rho_c^\ef  zdt,\\
& dy= \eps^{-2}\as_s ydt
+ G_s(z\hk+\zbar\bar{\hk}+y)dW_t,
\end{split}
\end{equation*}
or equivalently, 
\begin{subequations}\label{E:convertion_stall_noise2-2}
\begin{align}
& d\begin{bmatrix}
    z_1\\z_2\end{bmatrix}=\ym{\begin{bmatrix}
    0& -\varepsilon^{-2}b_c^\cf\\
    \varepsilon^{-2}b_c^\cf & 0
    \end{bmatrix}\begin{bmatrix}
    z_1\\z_2\end{bmatrix}+\begin{bmatrix}
    a_c^\qf& -b_c^\qf\\
    b_c^\qf & a_c^\qf
    \end{bmatrix}\begin{bmatrix}
    z_1\\z_2\end{bmatrix}}+\begin{bmatrix}
    G_c^R(z\hk+\zbar\bar{\hk}+y)\\G_c^I(z\hk+\zbar\bar{\hk}+y) \end{bmatrix}dW_t+\os(\eps^2),\label{E:convertion_stall_noise2a-2}\\
& dy= -\varepsilon^{-2}\as_s  ydt+  G_s(z\hk+\zbar\bar{\hk}+y)dW_t, \label{E:convertion_stall_noise2b-2}
\end{align}
\end{subequations}
where  $W(t)\sim\eps^{-1} W(\eps^{-2}t)$\footnote{We abuse the notation   to avoid redundancy.} and 
\ym{\begin{subequations}\label{E: G_c-2}
\begin{align}
& \hat{G}_1(u)w:=\langle \hk^*,G(u)w\rangle,\;\;\forall u\in \mathcal{H}_\alpha, \;\forall w\in \mathcal{V};\\
& G_c^R(u)=\frac{\hat{G}_1(u)+\overline{\hat{G}_{1}(u)}}{2},\;G_c^I(u)=\frac{\hat{G}_1(u)-\overline{\hat{G}_{1}(u)}}{2}.
\end{align}
\end{subequations}}
We impose the initial condition to \eqref{E:convertion_stall_noise2-2} as $z(0)=\langle\hk^*,u_0\rangle$ and $y(0)=P_su_0$. 

The truncation error term $\os(\eps^2) $ in \eqref{E:convertion_stall_noise2-2} comes from the transformation of $\rho_c^\ef z dt$, whose property has been verified in Proposition \ref{prop: er_bound-2}. \ym{In addition, given any $x(0)\in\hh_\alpha$  of order $ \os(1)$, one can easily verify by the boundedness of the operator $\as_c^\ef$ as well as the property of $\eee\sup_{0\leq t\leq T}\| x(t)\|_\alpha^p$ that, for any fixed time $T>0$ and fixed $p>0$, there exist constants  $C>0$ such that
\begin{equation}
    \eee\sup_{0\leq t\leq T}\left\|\int_0^t\as_c^\ef x(\sigma)d\sigma\right\|_\alpha^p\leq C\eps^{2p}.
\end{equation}}

We use the notation $\os(\eps^2)$ for short to indicate its order of error after integration. 
Due to the continuity of the term $\os(\eps^2) $ in $\eps$ and the insignificant effect, to this end, we work on the following equation to derive the first order approximation of the top Lyapunov exponent. 
\begin{subequations}\label{E: convertion_stall_noise2_new-2}
\begin{align}
& d\begin{bmatrix}
    z_1\\z_2\end{bmatrix}=\ym{\begin{bmatrix}
    0& -\varepsilon^{-2}b_c^\cf\\
    \varepsilon^{-2}b_c^\cf & 0
    \end{bmatrix}\begin{bmatrix}
    z_1\\z_2\end{bmatrix}+\begin{bmatrix}
    a_c^\qf& -b_c^\qf\\
    b_c^\qf & a_c^\qf
    \end{bmatrix}\begin{bmatrix}
    z_1\\z_2\end{bmatrix}}+\begin{bmatrix}
    G_c^R(z\hk+\zbar\bar{\hk}+y)\\G_c^I(z\hk+\zbar\bar{\hk}+y) \end{bmatrix}dW_t,\label{E: convertion_stall_noise2a_new-2}\\
& dy= -\varepsilon^{-2}\as_s  ydt+  G_s(z\hk+\zbar\bar{\hk}+y)dW_t. \label{E: convertion_stall_noise2b_new-2}
\end{align}
\end{subequations}

\subsection{The Furstenberg–Khasminskii Formula for the Top
Lyapunov Exponent}

Let
$$\mathfrak{p}(t)=\frac{1}{2}\ln(z_1^2(t)+z_2^2(t)),\;\;z_1(t)=e^{\pf(t)}\cos(\phi(t)),\;\;z_2(t)=e^{\pf(t)}\sin(\phi(t))$$ 
\text{and} $\eta_t=e^{-\pf(t)}y_t,$
where $\phi\in [0, 2\pi]$ is the phase angle in the unit sphere \cite{khas1967necessary} 
satisfying
\begin{equation}
    z_1=|z|\cos(\phi),\;\; z_2=|z|\sin(\phi) .
\end{equation}
Therefore, by It\^{o}'s formula,
\begin{subequations}\label{E: dyn_trans}
\begin{align}
 & d\pf      =a_c^\qf dt+\Xi(\phi,\eta)dt+[G_c^R(\phi,\eta)\cos\phi+G_c^I(\phi,\eta)\sin\phi]dW_t,\label{E: dyn_trans_a}\\
 & d\phi =(\varepsilon^{-2}b_c^\cf+b_c^\qf)dt+\Gamma (\phi,\eta) dt-G_c^\phi(\phi,\eta)dW_t\label{E: stable_phi},\\
        & d\eta = \varepsilon^{-2}\as_s\eta dt+ G_s(\phi,\eta)dW_t\label{E: stable_eta},
\end{align}
\end{subequations}
where\footnote{We abuse the notation $G$   and recast the arguments as $\phi$ and $\eta$.}
$$G_c^R(\phi,\eta):=G_c^R(\cos(\phi)\hk+\sin(\phi)\bar{\hk}+\eta),\;\;G_c^I(\phi,\eta):=G_c^I(\cos(\phi)\hk+\sin(\phi)\bar{\hk}+\eta),$$
$$G_s(\phi,\eta):=G_s(\cos(\phi)\hk+\sin(\phi)\bar{\hk}+\eta),\;\;G_c^\phi(\phi,\eta):=G_c^R(\phi,\eta)\sin\phi-G_c^I(\phi,\eta)\cos\phi,$$
and 
$$\Xi:=-\frac{\cos(2\phi)}{2}\tr[G_c^R(G_c^R)^*-G_c^I(G_c^I)^*](\phi,\eta)-\frac{\sin(2\phi)}{2}\tr[G_c^R(G_c^I)^*+G_c^I(G_c^R)^*](\phi,\eta),$$
$$\Gamma:=\frac{\sin(2\phi)}{2}\tr[G_c^R(G_c^R)^*-G_c^I(G_c^I)^*](\phi,\eta)-\frac{\cos(2\phi)}{2}\tr[G_c^R(G_c^I)^*+G_c^I(G_c^R)^*](\phi,\eta). $$
We also name $G_c^\pk(\phi,\eta):=G_c^R(\phi,\eta)\cos\phi+G_c^I(\phi,\eta)\sin\phi$ for future references. 

The initial condition is such that $\pf_0=\pf(0)=\ln|z(0)|$, $\phi_0=\phi(0)=\arctan\left(\frac{z_2(0)}{z_1(0)}\right)$ and $\eta_0=\eta(0)=e^{-\pf(0)}y(0)$.

We also denote the drift term of~\eqref{E: dyn_trans_a} as
\begin{equation}
    \mathcal{Q}^\qf(\phi,\eta):=a_c^\qf+\Xi(\phi,\eta).
\end{equation}
\begin{rem}\label{rem: rem}
Let $a:=\operatorname{tr}[G_c^R]$ and  $b:=\operatorname{tr}[G_c^I]$ for some fixed $(\phi,\eta)$, then $\Xi+i\Gamma=-e^{-2i\phi}(a+bi)^2$.
\end{rem}

Noticing that $\pf(t)$ only depends on $\phi(t)$ and $\eta(t)$, if there exists a unique invariant measure $\mu^\eps$ for the product process $(\phi(t),\eta(t))\in [0, 2\pi]\times\mathcal{H}_{s}$, the top Lyapunov exponent of $\eps^{-1}u$ can be determined by the Furstenberg–Khasminskii formula:
\begin{equation}\label{E: formula}
\begin{split}
       \lambda^{\qf,\eps}&=\lim_{t\rightarrow\infty}\frac{1}{t}\ln |z(t)|\\
        & =\int_{[0, 2\pi]\times\hh_s}\Qs^\qf(\phi,\eta) \mu^{\qf,\eps}(d\phi,d\eta)
        =:\langle \Qs^\qf,\mu^{\qf,\eps}\rangle.
\end{split}
\end{equation}
\ym{To this end, we show the existence and uniqueness of the invariant measure under appropriate conditions. }

\begin{rem}\label{rem: notaion_simplify}
Since we do not emphasize the variation $\qf$ as what we do in the study of bifurcation theory, to simplify the notation, we  use $\lambda^\eps$, $\Qs$ and $\mu^\eps$ instead for the derivations and proofs. 
\end{rem}

\subsection{Existence of Invariant Measure}
Note that \eqref{E: stable_phi} and \eqref{E: stable_eta} are coupled via the multiplicative noise. The  mutual dependence of $\phi$ and $\eta$ brings difficulty to study the explicit dependence of $\{\phi(t)\}_{t\geq 0}$ for the solution $\{\eta(t)\}_{t\geq 0}$ to \eqref{E: stable_eta} pathwisely. Nonetheless, we are able to take  advantage of the compactness of $[0, 2\pi]$ and start with investigating the bounds for 
the stable marginals based on Assumptions \ref{ass: A2-2} and \ref{ass: G-2}.  

\begin{lem}\label{lem: bound}
Let Assumptions \ref{ass: A1-2}, \ref{ass: A2-2} and \ref{ass: G-2} be satisfied. \ym{Suppose that $\eta(0)\in\hha$}, then there exists a $C>0$ such that for 
 sufficiently small $\eps>0$, 
$$\sup_{t\geq 0}\eee\|\eta(t)\|_\alpha^2\leq C.$$
\end{lem}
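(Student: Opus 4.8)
The plan is to run the stable mode $\eta$ through the mild formulation of \eqref{E: stable_eta} and close a Gronwall-type inequality for $f(t):=\eee\|\eta(t)\|_\alpha^2$ which, once $\eps$ is small enough, becomes a genuine contraction uniformly in $t$ because the dissipation rate $\eps^{-2}c_s$ of the (rescaled) stable semigroup beats the $\eps$-independent growth constant of $G$. Write $\{S_\eps(t)\}_{t\ge 0}$ for the analytic $C_0$-semigroup $e^{\eps^{-2}t\as_s}$ generated by the linear part of \eqref{E: stable_eta}. Since $\as^\alpha$ commutes with $e^{t\as_s}$ and maps $\hh_s$ into itself, Assumption \ref{ass: A2-2}(1) lifts to the fractional scale: $\|S_\eps(t)v\|_\alpha\le M e^{-\eps^{-2}c_s t}\|v\|_\alpha$ for all $v\in\hha$, and hence $\|S_\eps(t)B\|_{\ls_2(\vs,\hh_\alpha)}\le M e^{-\eps^{-2}c_s t}\|B\|_{\ls_2(\vs,\hh_\alpha)}$ for every $B\in\ls_2(\vs,\hh_\alpha)$, by $\|AB\|_{\ls_2}\le\|A\|_{\mathrm{op}}\|B\|_{\ls_2}$. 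The mild form of \eqref{E: stable_eta} reads
\begin{equation*}
\eta(t)=S_\eps(t)\eta(0)+\int_0^t S_\eps(t-\sigma)\,G_s(\phi(\sigma),\eta(\sigma))\,dW_\sigma .
\end{equation*}

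First I would bound the diffusion coefficient uniformly in the angle. As $P_s=I-P_c$ is bounded on $\hh_\alpha$ ($P_c$ being finite rank) and the critical eigenvectors lie in $\hh_\alpha$, one has $\|\cos\phi\,\hk+\sin\phi\,\bar{\hk}\|_\alpha\le C_0:=\|\hk\|_\alpha+\|\bar{\hk}\|_\alpha$ for all $\phi$, so Assumption \ref{ass: G-2}(1) gives
\begin{equation*}
\|G_s(\phi,\eta)\|_{\ls_2(\vs,\hh_\alpha)}\le\|P_s\|_{\mathrm{op}}\,\ell_1\,\|\cos\phi\,\hk+\sin\phi\,\bar{\hk}+\eta\|_\alpha\le C_1(1+\|\eta\|_\alpha),
\end{equation*}
with $C_1$ independent of $\eps$. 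Taking $\|\cdot\|_\alpha^2$ in the mild formula, using $(a+b)^2\le 2a^2+2b^2$, the It\^o isometry in $\hh_\alpha$ (valid by the bound just derived together with standard integrability of $\eta$), $\eee\|G_s(\phi,\eta)\|_{\ls_2}^2\le 2C_1^2(1+f)$, and the two semigroup estimates, I obtain with $\beta_\eps:=2\eps^{-2}c_s$, $\kappa_\eps:=4M^2C_1^2$ and $\int_0^t e^{-\beta_\eps(t-\sigma)}\,d\sigma\le\beta_\eps^{-1}$:
\begin{align*}
f(t)&\le 2M^2 e^{-\beta_\eps t}\|\eta(0)\|_\alpha^2+2M^2\int_0^t e^{-\beta_\eps(t-\sigma)}\,\eee\|G_s(\phi(\sigma),\eta(\sigma))\|_{\ls_2(\vs,\hh_\alpha)}^2\,d\sigma\\
&\le 2M^2\|\eta(0)\|_\alpha^2+4M^2C_1^2\int_0^t e^{-\beta_\eps(t-\sigma)}\big(1+f(\sigma)\big)\,d\sigma\\
&\le A+\kappa_\eps\int_0^t e^{-\beta_\eps(t-\sigma)}f(\sigma)\,d\sigma,\qquad A:=2M^2\|\eta(0)\|_\alpha^2+\kappa_\eps/\beta_\eps.
\end{align*}

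To conclude, observe that $\kappa_\eps$ is fixed while $\beta_\eps\to\infty$ as $\eps\downarrow 0$; fix $\eps_0>0$ with $\kappa_\eps/\beta_\eps\le 1/2$ and $\beta_\eps\ge 1$ for all $\eps\in(0,\eps_0)$. Standard well-posedness for \eqref{E: stable_eta} (globally Lipschitz, indeed linear, coefficients) gives $\eee\sup_{s\le T}\|\eta(s)\|_\alpha^2<\infty$ for each $T$, so $F_T:=\sup_{s\le T}f(s)<\infty$; bounding the last integral in the displayed inequality by $\beta_\eps^{-1}F_T$ yields $F_T\le A+\tfrac12 F_T$, hence $F_T\le 2A$ for every $T$, and letting $T\to\infty$ gives $\sup_{t\ge 0}\eee\|\eta(t)\|_\alpha^2\le 2A\le 4M^2\|\eta(0)\|_\alpha^2+8M^2C_1^2=:C$ (using $\beta_\eps\ge 1$), which is the claim, the hypothesis $\eta(0)\in\hha$ ensuring $\|\eta(0)\|_\alpha<\infty$. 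The coupling of $\eta$ to $\phi$ is harmless, since $\phi$ enters only through the uniformly $\hh_\alpha$-bounded vectors $\cos\phi\,\hk+\sin\phi\,\bar{\hk}$. The genuinely delicate ingredients are the decay/smoothing estimate for $S_\eps$ on the fractional scale $\hha$ and the validity of the It\^o-isometry bound there — both supplied by analyticity together with Assumptions \ref{ass: A2-2}(1) and \ref{ass: G-2} — while the real crux, and the reason the hypothesis "$\eps$ sufficiently small" cannot be dropped, is precisely that the $\eps^{-2}$ dissipation must outweigh the $\eps$-independent noise growth, i.e. $\kappa_\eps<\beta_\eps$.
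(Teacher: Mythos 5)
Your argument is correct, but it takes a different route from the paper. You work with the mild formulation: you lift the stable-semigroup decay of Assumption \ref{ass: A2-2}(1) to the fractional scale $\hha$ (via commutation of $\as^\alpha$ with the semigroup), bound the stochastic convolution through the It\^{o} isometry and the factor $e^{-2\eps^{-2}c_s(t-\sigma)}$, and then absorb the Gronwall-type integral term using $\kappa_\eps/\beta_\eps\le 1/2$ for small $\eps$, with an a priori finiteness of $\sup_{s\le T}\eee\|\eta(s)\|_\alpha^2$ supplied by well-posedness of the (affine-in-$\eta$, bounded-in-$\phi$) equation. The paper instead regularizes with the Yosida approximation $\as_{s,n}$, applies It\^{o}'s formula to $\|\eta_n(t)\|_\alpha^2$, takes expectations to obtain the differential inequality $\frac{d}{dt}\eee\|\eta_n\|_\alpha^2\le -\tilde{w}\,\eee\|\eta_n\|_\alpha^2+\tilde{C}$, concludes by Gronwall, and passes to the limit $n\to\infty$. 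Both proofs rest on the same mechanism — the $\eps^{-2}$ dissipation of the stable modes dominating the $\eps$-independent linear growth of $G_s$ — and both are uniform in $t$. Your mild-solution route avoids the Yosida step (no need to justify It\^{o}'s formula for the unbounded generator) and produces explicit constants, at the cost of needing the semigroup estimate on $\hha$ and the a priori moment bound on compact intervals; the paper's energy/Yosida route needs neither of these but requires the dissipativity of $\as_{s,n}$ in the $\alpha$-inner product uniformly in $n$ and a limiting argument, and it is the form of the estimate that the paper reuses later (Lemma \ref{lem: dis} and Remark \ref{rem: L2_limit}) for contraction between two solutions. One cosmetic remark: your constant $C$ depends on $\|\eta(0)\|_\alpha$ (or $\eee\|\eta(0)\|_\alpha^2$ if the initial datum is random), which is consistent with the statement since $\eta(0)\in\hha$ is fixed.
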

\begin{proof}
Consider Yosida approximation $\as_{s,n}:=n\as_s(nI-\as_s)^{-1}$ of $\as_s$.
 We denote $\eta_n$ by the solution to 
 $$ d\eta_n=-\varepsilon^{-2}\as_{s,n}\eta_n dt+ G_s(\phi,\eta_n)dW_t,\;\eta_{n}(0)=\eta(0). $$
Apply It\^{o}'s formula to $\|\eta_n(t)\|_\alpha^2$, then
\begin{equation*}
    \begin{split}
      d\|\eta_n(t)\|_\alpha^2
      =&-2\eps^{-2}\langle \as_{s,n}\eta_n(t),\eta_n(t)\rangle_\alpha dt+\|G_s(\phi(t),\eta_n(t))\|^2_{\LL_2}dt\\
      &+2\langle \eta_n(t),\; G_s(\phi(t),\eta_n(t))dW_t\rangle_\alpha.  
    \end{split}
\end{equation*}
Taking the expectation, using the property of $\as_{s,n}$ and $G_s$, 
for some $\eps$ within a small range,
there exists an $w>0$, $\tilde{w}>0$ and $\tilde{C}>0$ such that for all $t\geq 0$, 
\begin{equation*}
\begin{split}
      \frac{d\eee\|\eta_n(t)\|_\alpha^2}{dt}&=\eee\left\{-2\eps^{-2}\langle \as_{s,n}\eta_n(t),\eta_n(t)\rangle_\alpha +\|G_s(\phi(t),\eta_n(t))\|^2_{\LL_2}\right\}\\
   & \leq -2\eps^{-2}w \eee\|\eta_n(t)\|_\alpha^2+\ell_2\eee|\cos(\phi(t))+\sin(\phi(t))|^2+\ell_2\eee\|\eta_n(t)\|_\alpha^2\\
    & \leq -\tilde{w} \eee\|\eta_n(t)\|_\alpha^2+\tilde{C}
\end{split}
\end{equation*}
It follows from Gronwall’s inequality that, \ym{for each $n$, we have} $\eee\|\eta_n(t)\|_\alpha^2<C$ for every $t\geq 0$ and some $C>0$. The conclusion follows by sending $n$ to infinity. 
\end{proof}

\begin{lem}\label{lem: bound_sup_linear}
Let the assumptions in Lemma \ref{lem: bound} be satisfied. Fix  any  $T>0$ and any $p\geq 2$. 
For any initial condition $\eta(0)\in\hha$ a.s., there exists some $C>0$ such that
$$\eee\sup_{0\leq t\leq T}\|\eta(t)\|_\alpha^p\leq \|\eta(0)\|_\alpha^p+ C\eps^{p}.$$
\end{lem}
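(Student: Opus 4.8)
The plan is to upgrade the $L^2$-in-expectation bound of Lemma~\ref{lem: bound} to an $L^p$-bound with the supremum inside the expectation, keeping track of the powers of $\eps$. As in the proof of Lemma~\ref{lem: bound}, I would first replace $\as_s$ by its Yosida approximation $\as_{s,n}=n\as_s(nI-\as_s)^{-1}$ and work with the regularized solution $\eta_n$ of $d\eta_n=-\eps^{-2}\as_{s,n}\eta_n\,dt+G_s(\phi,\eta_n)\,dW_t$ with $\eta_n(0)=\eta(0)$, establish all bounds uniformly in $n$, and conclude by sending $n\to\infty$. Recall that, by Assumption~\ref{ass: G-2}(3) together with $G(\zero)=\zero$, the map $G$ is linear, so $G_s(\phi,\eta_n)=G_s(\cos\phi\,\hk+\sin\phi\,\bar{\hk})+G_s(\eta_n)$, where the first summand has $\ls_2(\vs,\hh_{\alpha,s})$-norm bounded by a fixed constant, uniformly in $\phi\in[0,2\pi]$ (since $\hk,\bar{\hk}$ are fixed vectors and $\|G_s(u)\|_{\ls_2}\le\ell_1\|u\|_\alpha$), while the second has $\ls_2$-norm at most $\ell_1\|\eta_n\|_\alpha$.

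Applying It\^o's formula to $\|\eta_n(t)\|_\alpha^p$ (for instance by composing $\|\eta_n(t)\|_\alpha^2$ with $x\mapsto x^{p/2}$) and using the dissipativity $\langle\as_{s,n}u,u\rangle_\alpha\ge w\|u\|_\alpha^2$ already exploited in Lemma~\ref{lem: bound}, the growth bounds of Assumption~\ref{ass: G-2}, Young's inequality, and the fact that $\eps^{-2}$ dominates every fixed constant once $\eps$ is small, I expect to reach
\begin{equation*}
 d\|\eta_n(t)\|_\alpha^p\le\bigl(-\kappa\eps^{-2}\|\eta_n(t)\|_\alpha^p+C\eps^{p-2}\bigr)\,dt+dM_n(t)
\end{equation*}
for some $\kappa>0$, where $M_n$ is a local martingale with $d\langle M_n\rangle_t\le C\|\eta_n(t)\|_\alpha^{2p-2}\bigl(1+\|\eta_n(t)\|_\alpha^2\bigr)\,dt$. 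Applying It\^o's formula to the weighted process $e^{\kappa\eps^{-2}t}\|\eta_n(t)\|_\alpha^p$, integrating, and taking expectations removes the martingale and yields the pointwise-in-time estimate $\eee\|\eta_n(t)\|_\alpha^p\le e^{-\kappa\eps^{-2}t}\|\eta(0)\|_\alpha^p+C\eps^p$; in particular $\sup_{t\ge0}\eee\|\eta_n(t)\|_\alpha^p\le\|\eta(0)\|_\alpha^p+C\eps^p$. It is precisely the $\eps^{-2}$ dissipation that makes the coefficient on $\|\eta(0)\|_\alpha^p$ equal to $1$ and the noise term of order $\eps^p$ rather than $\eps^{p-2}$.

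To bring the supremum inside the expectation I would integrate the same inequality without taking expectations, obtaining
\begin{equation*}
 \|\eta_n(t)\|_\alpha^p\le e^{-\kappa\eps^{-2}t}\|\eta(0)\|_\alpha^p+\frac{C}{\kappa}\eps^p+\int_0^t e^{-\kappa\eps^{-2}(t-s)}\,dM_n(s);
\end{equation*}
taking $\sup_{[0,T]}$, the first two terms are already bounded by $\|\eta(0)\|_\alpha^p+C\eps^p$, so it remains to estimate $\eee\sup_{[0,T]}\bigl|\int_0^t e^{-\kappa\eps^{-2}(t-s)}dM_n(s)\bigr|$. For this I would apply the factorization method to this exponentially damped stochastic integral with a parameter $\beta\in(0,1/2)$: after H\"older's and the Burkholder--Davis--Gundy inequalities, the fast decay enters only through integrals such as $\int_0^\infty u^{-2\beta}e^{-c\eps^{-2}u}\,du=C\eps^{2(1-2\beta)}$ and $\int_0^\infty u^{(\beta-1)q}e^{-c\eps^{-2}u}\,du$, which produce explicit positive powers of $\eps$. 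Separating $G_s(\phi,\eta_n)$ into its $O(1)$, $\eta_n$-independent additive part $G_s(\cos\phi\,\hk+\sin\phi\,\bar{\hk})$ and its multiplicative part $G_s(\eta_n)$ (of $\ls_2$-norm $\le\ell_1\|\eta_n\|_\alpha$), the former contributes a term of order $\eps^p$ and the latter a term $C\eps^{\theta}\,\eee\sup_{[0,T]}\|\eta_n\|_\alpha^p$ with $\theta>0$, which is absorbed into the left-hand side once $\eps$ is small; letting $n\to\infty$ (with the uniform bounds and $\eta_n\to\eta$) finishes the proof.

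The step I expect to be the main obstacle is this last one: extracting the \emph{sharp} scale $\eps^p$ for the supremum of the martingale term while keeping coefficient $1$ on $\|\eta(0)\|_\alpha^p$. A direct application of Burkholder--Davis--Gundy to $M_n$ only sees the $O(1)$ size of the noise and ignores the $\eps^{-2}$ dissipation, so it must be combined with the exponential weighting / factorization above; one must also close the self-referential loop created by the multiplicative term $G_s(\eta_n)$ (done by absorbing the term $C\eps^{\theta}\eee\sup_{[0,T]}\|\eta_n\|_\alpha^p$), and choose the factorization parameters so that the final power of $\eps$ is $p$ (possibly up to a harmless logarithmic factor, coming from the fact that the supremum over $[0,T]$ samples the $\eps^{-2}$-fast process at $O(\eps^{-2})$ essentially independent times).
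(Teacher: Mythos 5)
Your route is genuinely different from the paper's. The paper never applies It\^o's formula to $\|\eta\|_\alpha^p$: it writes the mild solution $\eta(t)=e^{\eps^{-2}t\as_s}\eta(0)+\int_0^t e^{\eps^{-2}(t-s)\as_s}G_s(\phi(s),\eta(s))\,dW_s$, uses Lemma \ref{lem: bound} and the linear growth of $G_s$ to check that $\eee\int_0^T\|G_s(\phi(s),\eta(s))\|^p_{\ls_2(\vs,\hh_\alpha)}\,ds<\infty$, and then invokes the maximal estimate for stochastic convolutions \cite[Proposition 7.3]{da2014stochastic} for the convolution driven by the fast semigroup $e^{\eps^{-2}t\as_s}$; the factor $\eps^{p}$ is extracted from that estimate, and the initial-datum term is trivial because the stable semigroup is bounded. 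Your plan instead re-derives a damped-convolution maximal inequality by hand: Yosida approximation, It\^o on $\|\eta_n\|_\alpha^p$, exponential weighting, factorization of the damped martingale, and absorption of the multiplicative contribution. The paper's route is shorter because the noise coefficient enters the mild solution linearly, so no additive/multiplicative splitting or absorption is needed and Lemma \ref{lem: bound} alone closes the estimate; your route is more self-contained and, as a by-product, gives the sharper pointwise-in-time bound $\eee\|\eta(t)\|_\alpha^p\leq e^{-\kappa\eps^{-2}t}\|\eta(0)\|_\alpha^p+C\eps^{p}$, which the paper does not state.

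Two concrete soft spots remain in your sketch. First, the decisive quantitative step --- obtaining the power $\eps^{p}$, rather than $\eps^{p-2}$ or $\eps^{p}$ times a logarithm, for $\eee\sup_{0\leq t\leq T}\bigl|\int_0^t e^{-\kappa\eps^{-2}(t-s)}\,dM_n(s)\bigr|$ --- is exactly the step you leave unproved and hedge with ``up to a harmless logarithmic factor''; the lemma as stated has no such factor, and this is precisely the work the paper delegates to the cited convolution estimate. Note also that your additive/multiplicative splitting of $G_s$ does not decouple cleanly inside $M_n$: since $dM_n=p\|\eta_n\|_\alpha^{p-2}\langle\eta_n,G_s(\phi,\eta_n)\,dW\rangle_\alpha$, the weight $\|\eta_n\|_\alpha^{p-1}$ multiplies \emph{both} parts, so the $\eta_n$-independent part of $G_s$ also produces a term that must be handled by Young's inequality and absorption, not a clean $O(\eps^{p})$ contribution. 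Second, absorbing $C\eps^{\theta}\,\eee\sup_{[0,T]}\|\eta_n\|_\alpha^p$ into the left-hand side presupposes that this quantity is finite; you need an a priori finiteness argument (localization by stopping times, or the standard $L^p$ estimate for the Yosida-approximated equation) before subtracting. Neither issue is fatal, but as written the proposal proves the statement only up to these gaps, whereas the paper's short argument obtains the stated $\eps^{p}$ directly from the maximal inequality applied to the mild solution.
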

\begin{proof}
For  $t\in[0,T]$, the mild solution is given as 
\begin{equation}
    \eta(t)=e^{\eps^{-2}t\as_s}\eta(0)+\int_0^te^{\eps^{-2}(t-s)\as_s}G_s(\phi(s),\eta(s))dW_s.
\end{equation}
Let $P_sW_{\as}^G(t):=\int_0^te^{\eps^{-2}(t-s)\as_s}G_s(\phi(s),\eta(s))dW_s$ denote the stochastic convolution. 
The bound for the stochastic convolution follows \cite[Proposition 7.3]{da2014stochastic}. Indeed, by Lemma \ref{lem: bound}, there exists some $C>0$ and $C'>0$, such that
$$\int_0^T\eee\|G_s(\phi(s),\eta(s))\|^p_{\ls_2(\vs,\hh_\alpha)}ds\leq C\int_0^T\eee\|\cos(\phi(s))+\sin(\phi(s))+\eta(s)\|_\alpha^pds<C'<\infty. $$
Therefore $G_s(\phi(s),\eta(s))$ is $\LL^2$ predictable and there exists  constants $C_T>0$ and $C_T'>0$ such that
\begin{equation}
    \begin{split}
        &\eee\sup_{0\leq t\leq T}\left\|P_sW_{\as}^G(t)\right\|_\alpha^p
        \leq \eps^{p}C_T\eee\left(\int_0^T\|G_s(\phi(s),\eta(s)))\|_{\ls_2(\vs,\hh_\alpha)}^p ds\right)\leq  \eps^{p}C_T',
    \end{split}
\end{equation}
which implies that $\eee\sup_{0\leq t\leq T}\|\eta(t)\|_\alpha^p\leq \|\eta(0)\|_\alpha+C_T'\eps^p$.
\end{proof}

For test functions \ym{$f\in C_b^2([0, 2\pi]\times\mathcal{H}_{s})$}, the transition semigroup of \eqref{E: stable_phi} and \eqref{E: stable_eta} is such that $\trans_tf=\eee[f(\phi(t),\eta(t))|(\phi,\eta)]$.  Based on the compactness of $[0, 2\pi]$ and the above uniform bounds for $\{\eta(t)\}_{t\geq 0}$, the existence of invariant measure is guaranteed. 
\begin{prop}\label{prop: inv_phi}
Let the assumptions in Lemma \ref{lem: bound} be satisfied. Then
there exists an invariant measure for the transition semigroup $\{\trans_t\}_{t\geq 0}$ of \eqref{E: stable_phi} and \eqref{E: stable_eta} on $[0,2\pi]\times\hh_s$.
\end{prop}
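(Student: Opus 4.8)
The plan is to invoke the Krylov--Bogolyubov theorem, so the task reduces to establishing tightness of the family of time-averaged measures $\mu_T := \frac{1}{T}\int_0^T \trans_t^*\delta_{(\phi_0,\eta_0)}\,dt$ on $[0,2\pi]\times\hh_s$, for a conveniently chosen deterministic starting point $(\phi_0,\eta_0)$ with $\eta_0\in\hha$. Since $[0,2\pi]$ is compact, tightness on the product reduces to tightness of the $\eta$-marginals in $\hh_s$. The natural way to get this is to upgrade the uniform second-moment bound of Lemma~\ref{lem: bound} from the $\hh_\alpha$-norm (equivalently $\hh_{\alpha,s}$) to a bound in a \emph{strictly stronger} fractional norm $\hh_{\beta,s}$ with $\beta>\alpha$: because $\as_s$ generates a compact semigroup, the embedding $\hh_{\beta,s}\hookrightarrow\hh_{\alpha,s}=\hh_s$ is compact, so $\sup_{t\ge 0}\eee\|\eta(t)\|_\beta^2\le C$ together with Markov's inequality yields a compact set $K_R=\{\|\cdot\|_\beta\le R\}$ with $\mu_T(K_R^c)\le C/R^2$ uniformly in $T$, which is exactly tightness.

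The key steps, in order: (i) pick $\beta\in(\alpha,1]$; (ii) run the Yosida-approximation argument of Lemma~\ref{lem: bound} but measuring $\|\eta_n(t)\|_\beta^2$ instead of $\|\eta_n(t)\|_\alpha^2$ --- here one uses that $G(u)\in\ls_2(\vs,\hh_\beta)$ with $\|G(u)\|_{\ls_2(\vs,\hh_\beta)}\le \ell_1\|u\|_\beta$ by Assumption~\ref{ass: G-2} applied at the level $\beta$, and the dissipativity estimate $\langle\as_{s,n}v,v\rangle_\beta\ge w\|v\|_\beta^2$ inherited from Assumption~\ref{ass: A2-2}(1); (iii) conclude via Gronwall that $\sup_{t\ge0}\eee\|\eta_n(t)\|_\beta^2\le C$ uniformly in $n$, then send $n\to\infty$; (iv) form the averaged measures $\mu_T$, use the $\hh_\beta$-bound plus compact embedding to extract tightness, apply Krylov--Bogolyubov to obtain a limit point $\mu^\eps$; (v) check that $\mu^\eps$ is genuinely invariant for $\{\trans_t\}$ --- this needs the Feller property of $\trans_t$ on $C_b([0,2\pi]\times\hh_s)$, which follows from continuous dependence of the mild solution $(\phi,\eta)$ on the initial data (a standard Gronwall/BDG argument using the Lipschitz bounds in Assumption~\ref{ass: G-2}).

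The main obstacle I expect is step (ii): closing the dissipativity estimate in the stronger $\hh_\beta$-norm. The subtlety is that the It\^o correction term contributes $\|G_s(\phi,\eta_n)\|_{\ls_2(\vs,\hh_\beta)}^2$, which by Assumption~\ref{ass: G-2}(1) is bounded by $\ell_1^2\|\cos\phi\,\hk+\sin\phi\,\bar\hk+\eta_n\|_\beta^2$; one must absorb the $\ell_1^2\|\eta_n\|_\beta^2$ piece into the $-2\eps^{-2}w\|\eta_n\|_\beta^2$ dissipation, which is fine for $\eps$ small, but one also needs $\hk,\bar\hk\in\hh_\beta$ so that the inhomogeneous term $\|\cos\phi\,\hk+\sin\phi\,\bar\hk\|_\beta$ is a finite constant --- this holds since $\hk=e_1,\bar\hk=e_{-1}$ are eigenvectors of $\as$ and hence lie in $\dom(\as^\beta)=\hh_\beta$ for every $\beta$. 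A secondary point requiring care is the Feller property in step (v): the coupling of $\phi$ and $\eta$ through the multiplicative noise means one estimates $\eee\big[\sup_{t\le T}(|\phi(t)-\phi'(t)|^2+\|\eta(t)-\eta'(t)\|_\alpha^2)\big]$ jointly, using the Lipschitz-type bounds $\|G'(u)\cdot v\|_{\ls_2}\le\ell_2\|v\|_\alpha$ and the smoothing of $e^{\eps^{-2}t\as_s}$, together with the BDG inequality as in Lemma~\ref{lem: bound_sup_linear}; the $\arctan$ in the definition of $\phi_0$ is harmless since $|z|>0$ for nontrivial data. If one prefers to avoid proving the Feller property explicitly, an alternative is to note that $\trans_t$ maps $C_b$ into $C_b$ because the SPDE has pathwise-unique mild solutions depending continuously on initial conditions, which is already implicit in the well-posedness assumed for \eqref{E: abstract}.
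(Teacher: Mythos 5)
Your proposal is correct in outline and reaches the same endpoint (Krylov--Bogolyubov applied to time-averaged laws), but the tightness mechanism is genuinely different from the paper's. You obtain tightness by an energy estimate in a stronger fractional norm: redo the Yosida/It\^o/Gronwall argument of Lemma~\ref{lem: bound} at level $\beta>\alpha$ and use the compact embedding of fractional power spaces plus Markov's inequality. The paper instead keeps the bounds of Lemmas~\ref{lem: bound} and~\ref{lem: bound_sup_linear} as they are and extracts compactness from the \emph{semigroup} side, via the factorization formula $y(t)=S(t)y_0+\frac{\sin\alpha\pi}{\pi}\mathcal{G}_\alpha(Y_\alpha)(t)$ and the compactness of the convolution operator $\mathcal{G}_\alpha$ (\cite[Proposition 8.4]{da2014stochastic}, together with \cite[Proposition 6]{da1992invariant}); this is the standard Da Prato--Zabczyk route and avoids re-running the moment estimate in a stronger norm. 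Your route buys a more elementary, self-contained argument, but note two points: (i) the identification ``$\hh_{\alpha,s}=\hh_s$'' in your write-up is a slip, and in fact the upgrade to $\beta>\alpha$ is unnecessary for the statement as posed --- the invariant measure is sought on $[0,2\pi]\times\hh_s$, and since the compact semigroup gives a compact embedding $\hha\hookrightarrow\hh_s$, the existing bound $\sup_{t\ge0}\eee\|\eta(t)\|_\alpha^2\le C$ of Lemma~\ref{lem: bound} already yields tightness of the $\eta$-marginals, which also removes your spurious restriction $\alpha<1$ needed to choose $\beta\in(\alpha,1]$; (ii) your Feller/continuous-dependence step is the one place requiring real care, since the drift $\Gamma(\phi,\eta)$ and the diffusion coefficients of \eqref{E: stable_phi}--\eqref{E: stable_eta} are quadratic in $\eta$ and hence only locally Lipschitz, so the Gronwall/BDG argument needs a localization using the moment bounds --- a point the paper's sketch does not address either, so it is not a gap relative to the paper's own proof.
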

\begin{proof}
\ym{We show the sketch of the proof. By the stability assumptions on $\as_s$, one can check that
$$\int_0^1 t^{-2\alpha}\|S(t)\|_{\LL_2(\hh,\hh)}^2dt<\infty$$
for each $\alpha\in(0,1]$, where $S(t):= e^{\eps^{-2}t\as_s}$. This implies that $\as_s$ generates a compact semigroup on $\hh$. It follows that, by introducing the compact operator $\mathcal{G}_{\alpha}: L^p([0,T]; \hh))\rightarrow C([0,T]; \hh)$ for $0<1/p< \alpha\leq 1$ and $t\in[0,T]$:
$$\mathcal{G}_{\alpha}f(t)=\int_0^t (t-s)^{\alpha-1}S(t-s)f(s)ds,\;\;f\in L^p([0,T], \hh),$$
as well as $Y_{\alpha}(t)=\int_0^t (t-r)^{-\alpha}S(t-r)G_s(\phi(r),\eta(r))dW(r),$
the mild solution can be expressed as
\begin{equation} \label{E:mildv}
y(t)=S(t)y_0+\frac{\sin\alpha\pi}{\pi}\mathcal{G}_{\alpha}(Y_{\alpha})(t).
\end{equation}
The compactness of $\mathcal{G}_{\alpha}$ has been shown in \cite[Proposition 8.4]{da2014stochastic} based on the compactness of $\{S(t)\}_{t\geq 0}$. 

Let $\mathscr{L}(\cdot)$ denote the law of random variables on the canonical space generated by $[0, 2\pi]\times\hh_s$.  By the compactness of $[0, 2\pi]$  as well as the boundedness given in Lemma \ref{lem: bound} and \ref{lem: bound_sup_linear},  applying \cite[Proposition 6]{da1992invariant}, it is straightforward to show that 
$$\left\{\frac{1}{t_n}\int_0^{t_n}\mathscr{L}(\phi(s),\eta(s))ds\right\}$$ forms a tight family of measure.   The existence of invariant measure for  $\{(\phi(t),\eta(t))\}_{t\geq 0}$ under $\trans_t$ follows by Krylov-Bogoliubov's Theorem  (along the same time sequence).
}
\end{proof}

\subsection{Transient Dissipativity of the Stable Modes}\label{sec: stable_trans}
The following lemma shows the approximated dissipativity condition given any transient transitions.
\begin{lem}\label{lem: dis}
Let the assumptions in Lemma \ref{lem: bound} be satisfied. For each arbitrarily small $\eps>0$ and fixed $\phi\in[0, 2\pi]$, there exists $w>0$ such that, for all $\eta_1,\eta_2\in\hha$, 
$$-2\eps^{-2}\langle \as_{s,n}(\eta_1-\eta_2),(\eta_1-\eta_2)\rangle_\alpha +\|G_s(\phi,\eta_1)-G_s(\phi,\eta_2)\|^2_{\LL_2}\leq -\eps^{-2}w\|\eta_1-\eta_2\|_\alpha,$$
where $\as_{s,n}$ is the  Yosida approximation of $\as_s$.
\end{lem}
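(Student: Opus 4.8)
The plan is to compute the left-hand side explicitly using the structure of $\as_{s,n}$ and the Lipschitz/Hilbert--Schmidt bounds on $G$, and then absorb the $G$-contribution into the dissipative term coming from $\as_{s,n}$ at the cost of shrinking the decay rate. First I would set $\xi := \eta_1 - \eta_2 \in \hha$. Since the equation for $\eta$ is affine in $\eta$ with linear drift $-\eps^{-2}\as_{s,n}$ (or $-\eps^{-2}\as_s$ in the mild formulation) and diffusion $G_s(\phi,\cdot)$, and since $G''=0$ by (3) of Assumption~\ref{ass: G-2}, the map $\eta \mapsto G_s(\phi,\eta)$ is affine, so $G_s(\phi,\eta_1) - G_s(\phi,\eta_2) = G_s'(\cdot)\cdot\xi$ with $G_s'$ the (constant) Fr\'echet derivative. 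Then (2) of Assumption~\ref{ass: G-2} gives directly
\begin{equation*}
\|G_s(\phi,\eta_1) - G_s(\phi,\eta_2)\|_{\LL_2}^2 = \|G_s'(\cdot)\cdot\xi\|_{\LL_2(\vs,\hh_{\alpha,s})}^2 \le \ell_2^2 \|\xi\|_\alpha^2,
\end{equation*}
uniformly in $\phi$ and in the base point.

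Next I would handle the dissipative term. Recall the Yosida approximation $\as_{s,n} = n\as_s(nI-\as_s)^{-1}$; since $\as_s$ is self-adjoint with spectrum in $\{\operatorname{Re}\le -c_s\}$ (Assumption~\ref{ass: A2-2}(1), which via self-adjointness forces $\as_s \le -c_s I$ on $\hh_s$), the operators $\as_{s,n}$ are self-adjoint, commute with $\as_s$ and hence with $\as_s^\alpha$, and satisfy $\as_{s,n} \le -c_{s,n} I$ with $c_{s,n} = n c_s/(n+c_s) \uparrow c_s$; in particular $c_{s,n} \ge c_s/2$ for all $n$ large. Therefore, using that $\langle \as_{s,n}\xi,\xi\rangle_\alpha = \langle \as_{s,n}\as_s^\alpha\xi,\as_s^\alpha\xi\rangle \le -c_{s,n}\|\as_s^\alpha\xi\|^2 = -c_{s,n}\|\xi\|_\alpha^2$, we obtain
\begin{equation*}
-2\eps^{-2}\langle \as_{s,n}\xi,\xi\rangle_\alpha + \|G_s(\phi,\eta_1)-G_s(\phi,\eta_2)\|_{\LL_2}^2 \le -2\eps^{-2}c_{s,n}\|\xi\|_\alpha^2 + \ell_2^2\|\xi\|_\alpha^2 \le -\eps^{-2}w\,\|\xi\|_\alpha^2
\end{equation*}
for any $w < c_s$, provided $\eps$ is small enough that $\eps^{-2}(2c_{s,n} - w) \ge \ell_2^2$, i.e.\ $\eps^2 \le (2c_{s,n}-w)/\ell_2^2$; this is exactly the "arbitrarily small $\eps$" regime in the statement. (If the statement's right-hand side is meant with $\|\xi\|_\alpha$ rather than $\|\xi\|_\alpha^2$, the same chain works on the set $\|\xi\|_\alpha \ge 1$ and the bound is trivially arranged otherwise, but I would read it as $\|\xi\|_\alpha^2$, consistent with the computation in the proof of Lemma~\ref{lem: bound}.)

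I do not expect a serious obstacle here: the lemma is essentially the one-sided Lipschitz (monotonicity) estimate already used implicitly in Lemma~\ref{lem: bound}, now stated for the difference of two solutions rather than for a single one. The only point requiring a little care is the uniformity in $n$ of the spectral bound $c_{s,n} \ge c_s/2$ for the Yosida approximants — one should note this follows from the spectral mapping $\lambda \mapsto n\lambda/(n-\lambda)$ applied to $\sigma(\as_s)\subseteq(-\infty,-c_s]$ — and the uniformity in $\phi$ of the $G$-estimate, which is immediate since $\ell_2$ in Assumption~\ref{ass: G-2}(2) does not depend on the base point. With these in hand the constant $w$ can be taken independent of $\eps$ (small), $n$ (large), and $\phi$, which is what subsequent arguments (a Banach fixed-point / exponential-ergodicity contraction for the $\eta$-marginal) will need.
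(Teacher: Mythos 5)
Your proposal is correct and is essentially the argument the paper intends: the paper's own ``proof'' is a one-sentence assertion, and your computation --- affinity of $G_s(\phi,\cdot)$ from $G''=0$ together with Assumption \ref{ass: G-2}(2), absorbed into the $\eps^{-2}$-dissipativity of the Yosida approximants, with the right-hand side read as $\|\eta_1-\eta_2\|_\alpha^2$ --- is exactly the estimate implicitly invoked in Lemma \ref{lem: bound} and Remark \ref{rem: L2_limit}. One small repair: by Assumption \ref{ass: A1-2} the eigenvalues $\rho_k=a_k+ib_k$ of $\as_s$ are genuinely complex, so you cannot literally cite a self-adjoint spectral calculus with $\sigma(\as_s)\subseteq(-\infty,-c_s]$; instead bound the quadratic form directly, noting that on each conjugate pair of modes $\as_{s,n}$ acts as a rotation-plus-scaling whose symmetric part is $\operatorname{Re}\bigl(n\rho_k/(n-\rho_k)\bigr)\leq -nc_s/(n+c_s)\leq -c_s/2$ for $n\geq c_s$, which yields the same uniform estimate $\langle\as_{s,n}\xi,\xi\rangle_\alpha\leq -c_{s,n}\|\xi\|_\alpha^2$ and leaves the rest of your argument unchanged.
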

\begin{proof}
The conclusion can be obtained under the assumptions considering sufficiently small $\eps>0$.
\end{proof}

\ym{\begin{rem}\label{rem: L2_limit}
For each fixed $\phi\in[0, 2\pi]$, we can denote $\{\eta^{\phi,\eta_0}(t)\}_{t\geq 0}$ as the solution to \eqref{E: stable_eta} with $\eta(0)=\eta_0$ a.s..
Note that by a similar approach as in \cite[Theorem 11.30]{da2014stochastic}, we can verify that for each fixed $\phi\in[0, 2\pi]$, given the initial condition $\eta_0=\zero$ a.s., the probability law of $\eta^{\phi,\zero}(t)$ converges weakly to the law of  a  random variable $\yk^\phi\in\LL^2(\Omega,\hha)$ as $\eps^{-2}t\rightarrow\infty$. 

Indeed, one can consider equation \eqref{E: stable_eta}
on the whole real line and denote $\{\eta^{\phi,\eta_0}_\tau(t)\}_{t\geq -\tau}$ by the solution that start at time $-\tau$ in $\eta_0$. Then $\mathscr{L}(\eta^{\phi,\eta_0}(\tau))=\mathscr{L}(\eta^{\phi,\eta_0}_\tau(0))$ for each $\tau\geq 0$.  
By similar proof as in Lemma \ref{lem: dis}, we have the mean square stability for each $\phi\in[0, 2\pi]$:
$$\eee\|\eta^{\phi,\zero}_\tau(0)-\eta^{\phi,\zero}_\sigma(0)\|_\alpha^2\leq MCe^{-\eps^{-2}w \tau}, \;\;\sigma>\tau.$$
The above inequality demonstrates that the Cauchy sequence in $\LL^2$ is dominated by $Ce^{-\eps^{-2}\omega \tau}$, and as $\eps^{-2}\tau\rightarrow\infty$, there exists a unique limit $\yk^\phi$  whose law is the required weak limit of $\{\mathscr{L}(\eta^{\phi,\zero}(t))\}_{t\geq 0}$. 
\end{rem}}

\ym{By virtue of Remark \ref{rem: L2_limit}},  the following proposition shows the transient behavior of the transition along the $\hh_s$ subspace. \ym{It is worth noting that the result only trivially describes the stable marginal of an invariant measure.}
\begin{prop}
As $\eps\ra 0$, at each each $t>0$, the marginal transition probability $H_{t}(\cdot\;|\;\phi,\eta)$ of \eqref{E: stable_eta}  
converges weakly a  measure $\nu^\phi(d\eta)$ on $\hh_s$ that only depends on $\phi$.
\end{prop}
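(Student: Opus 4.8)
The plan is to exploit the fact that equation \eqref{E: stable_eta} becomes infinitely fast as $\eps\to 0$: under the rescaled time $s = \eps^{-2}t$, the process $\eta$ solves an SPDE whose drift $\as_s$ is strongly dissipative (Assumption \ref{ass: A2-2}(1)) and whose diffusion coefficient $G_s(\phi,\cdot)$ is, for \emph{fixed} $\phi$, globally Lipschitz with the dissipativity margin of Lemma \ref{lem: dis}. First I would freeze $\phi$ at an arbitrary value in $[0,2\pi]$ and invoke Remark \ref{rem: L2_limit}: for the frozen-coefficient equation, the law of $\eta^{\phi,\zero}(t)$ converges weakly as $\eps^{-2}t\to\infty$ to the law of the unique $\LL^2$-limit $\yk^\phi$, and by the exponential contraction estimate in Remark \ref{rem: L2_limit} this convergence is uniform over bounded sets of initial data (the bound $MCe^{-\eps^{-2}w\tau}$ depends on the initial condition only through $\eee\|\eta_0\|_\alpha^2$, which is controlled by Lemma \ref{lem: bound}). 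Define $\nu^\phi := \mathscr{L}(\yk^\phi)$ on $\hh_s$.

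The core of the argument is then a two-step comparison: the \emph{true} marginal transition probability $H_t(\cdot\mid\phi,\eta)$ corresponds to the process $\eta$ driven by the \emph{time-varying} phase $\phi(\cdot)$, whereas $\nu^\phi$ is built from the frozen phase. So the second step is to show that freezing $\phi$ introduces only an $o(1)$ error as $\eps\to 0$. Here is where I would use the time-scale separation quantitatively: fix $t>0$ and a horizon; on the fast scale, $\eta$ equilibrates after a time of order $\eps^2$, while $\phi$ evolves with the bounded drift $b_c^\qf + \Gamma(\phi,\eta)$ and an $O(1)$ diffusion, hence moves only $O(\eps)$ (in probability, uniformly) over any time window of length $O(\eps^2)$ --- this is precisely the content of Lemma \ref{lem: bound_sup_linear} applied to the $\phi$-component style estimate, together with It\^o's formula for \eqref{E: stable_phi}. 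Therefore one can run the coupled system up to a small time $t-\delta$, then over $[t-\delta,t]$ compare the true $\eta$ with the solution of the frozen equation started from the same configuration with $\phi$ held at $\phi(t-\delta)$; the difference is estimated by Lemma \ref{lem: dis} (which gives $\LL^2$-contraction at rate $\eps^{-2}w$, killing the initial discrepancy) plus a Gr\"onwall-type control of the drift/diffusion perturbation coming from $|\phi(s)-\phi(t-\delta)|$, which is $O(\eps)$ on that window. Taking $\eps\to 0$ first and then $\delta\to 0$, the frozen-phase limit $\nu^{\phi(t-\delta)}$ converges to $\nu^{\phi}$ by continuity of $\phi\mapsto\nu^\phi$ in the weak topology --- which itself follows from continuity of $(\phi,u)\mapsto G_s(\phi,u)$ in Assumption \ref{ass: G-2} and stability of the fixed point $\yk^\phi$ under the contraction.

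Concretely, I would organize the steps as: (i) construct $\nu^\phi$ and record $\LL^2$-contraction for the frozen equation (Remark \ref{rem: L2_limit}, Lemma \ref{lem: dis}); (ii) prove $\phi\mapsto\nu^\phi$ is weakly continuous; (iii) establish the small-oscillation estimate $\eee\sup_{s\in[t-\delta,t]}|\phi(s)-\phi(t-\delta)|^2 \le C(\delta + \eps^2)$-type bound via It\^o and Lemma \ref{lem: bound_sup_linear}; (iv) a coupling/Gr\"onwall estimate bounding $\eee\|\eta(t)-\tilde\eta(t)\|_\alpha^2$ where $\tilde\eta$ solves the frozen equation, using (iii) for the coefficient mismatch and Lemma \ref{lem: dis} for the contraction; (v) assemble: for $\vphi\in C_b([0,2\pi]\times\hh_s)$, bound $|\eee\vphi(\phi(t),\eta(t)) - \int \vphi(\phi(t),\cdot)\,d\nu^{\phi(t)}|$ by the coupling error plus the equilibration error plus a (iii)-controlled term, send $\eps\to 0$, then $\delta\to 0$. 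The main obstacle is step (iv)/(v): the phase $\phi(t)$ at the \emph{final} time is itself random and correlated with $\eta(t)$, so disentangling ``$\eta$ has equilibrated to $\nu^{\phi(t)}$'' from the joint fluctuations requires care --- I expect to handle this by conditioning on $\phi(t-\delta)$, using that on the short final window $\phi$ barely moves so $\nu^{\phi(t)}\approx\nu^{\phi(t-\delta)}$, and that $\eta$ on that window has already forgotten its past by the $\eps^{-2}w$-contraction; making the two $\delta$-dependences compatible (the fast relaxation needs $\delta \gg \eps^2$, the freezing error needs $\delta$ small) is the delicate balancing act, resolved by choosing, e.g., $\delta = \eps$ and letting $\eps\to 0$.
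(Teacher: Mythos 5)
The first half of your plan (freezing $\phi$, invoking Remark \ref{rem: L2_limit} to construct $\nu^\phi=\mathscr{L}(\yk^\phi)$, and showing the limit forgets the initial datum via the $\LL^2$-contraction plus a $C_b$ test-function/truncation argument) is exactly the paper's proof: in the paper $H_t(\cdot\,|\,\phi,\eta)$ is the transition of \eqref{E: stable_eta} with $\phi$ held fixed, and the whole content of the proposition is the independence of the limit from $\eta$, proved through the estimate \eqref{E: init} and the $I_1+I_2$ splitting with a cut-off at $\|\eta_0\|_\alpha\geq R$. Had you stopped there, you would have the paper's argument.

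The core of your proposal, however --- steps (iii)--(v), the comparison between the true coupled dynamics and the frozen-phase dynamics --- rests on a false premise. You assert that $\phi$ ``evolves with the bounded drift $b_c^\qf+\Gamma(\phi,\eta)$'' and hence moves only $O(\eps)$ over short windows; but the drift in \eqref{E: stable_phi} is $\eps^{-2}b_c^\cf+b_c^\qf+\Gamma$, and $b_c^\cf=b_1(\gamma_c)\neq 0$ by the Hopf assumption. So $\phi$ rotates on the \emph{same} $\eps^2$ time scale on which $\eta$ relaxes: over any window $[t-\delta,t]$ with $\delta\gg\eps^2$ (which you need so that the $\eps^{-2}w$-contraction of Lemma \ref{lem: dis} erases the initial discrepancy), the phase completes on the order of $b_c^\cf\delta/(2\pi\eps^2)\to\infty$ full rotations; with your choice $\delta=\eps$ it wraps roughly $\eps^{-1}$ times. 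Consequently the oscillation estimate $\eee\sup_{s\in[t-\delta,t]}|\phi(s)-\phi(t-\delta)|^2\leq C(\delta+\eps^2)$ is false (and Lemma \ref{lem: bound_sup_linear} concerns $\eta$, not $\phi$, so it cannot supply it), the coefficient mismatch $G_s(\phi(s),\cdot)-G_s(\phi(t-\delta),\cdot)$ is $O(1)$ through the $\cos\phi$, $\sin\phi$ dependence, and the Gr\"onwall/coupling step does not close. There is no freezing of the phase in the limit: the correct way the rotating $\phi$ enters is by averaging, which is precisely what the paper defers to Section \ref{sec: lya_approx}, where the leading term of the invariant measure is the uniform average $\frac{d\phi}{2\pi}\delta_0(d\eta)$ rather than anything tied to a terminal phase value; the proposition itself (as Remark \ref{rem: disintegration} stresses) only makes the weaker, frozen-$\phi$ claim.
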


\begin{proof}

By  Remark \ref{rem: L2_limit}, for $\eta(0)=0$ and each $\phi\in[0, 2\pi]$, there exists a unique limit $\yk^{\phi}$ with probability law $\nu^\phi(d\eta)$  as $\eps^{-2}t\rightarrow\infty$. Note that as $\eps\ra 0$, at each $t> 0$, we have $\eps^{-2}t\ra \infty$. Therefore, the marginal transition of $\eta^{\phi,\zero}(t)$ is given as  
\begin{equation}
    \begin{split}
        H_{t}(d\eta_t\;|\;\phi,\eta) &=H_{t}(d\eta_t\;|\;\phi,\eta)\mathds{1}_{\{\eta=\zero\}}\\
        &\approx \nu^\phi(d\eta_t).
    \end{split}
\end{equation}

We now consider a random initial distribution and let $\mathscr{L}(\eta(0))=\nu_0$. Since we have $\eee[\eta^2(0)]<\infty$, by a similar argument as Lemma \ref{lem: bound}, we can show that 
\begin{equation}\label{E: init}
    \lim_{\eps^{-2}t\rightarrow\infty}\eee\|\eta^{\phi,\zero}(t)-\eta^{\phi,\nu_0}(t)\|_\alpha^2=0,\;\;\phi\in[0, 2\pi],
\end{equation}
where $\eta^{\phi,\zero}(t) $ denotes the solution of \eqref{E: stable_eta} with $\eta(0)=0$ a.s. and $\eta^{\phi,\nu_0}(t) $ denotes the solution of \eqref{E: stable_eta} with $\mathscr{L}(\eta(0))=\nu_0$
for some fixed $\phi$. We aim to show that for any test function $f\in C_b( \hh_s)$, each of $\{\eta^\phi(t)\}_\phi$ with an arbitrary initial distribution converges weakly to the same limit point with probability law in $\{\nu^\phi \}_\phi$ for $\nu^\phi(d\eta_t):= \lim\limits_{\eps^{-2}t\rightarrow\infty}H_t(d\eta_t\;|\;\phi,\eta)\nu_0(d\eta)$, i.e.,  
\begin{equation}
    \int_{\hh_s} f( \eta(t))H_t(d\eta_t\;|\;\phi,\eta)\nu_0(d\eta)\xrightarrow{\eps^{-2}t\rightarrow \infty} \int_{\hh_s} f( \eta(t))\nu^\phi(d\eta_t),\;\;\phi\in[0, 2\pi].
\end{equation} 

Following the approach in \cite[Theorem 1]{da1992invariant}, we can show that for each fixed $\phi$,
\begin{equation}\label{E: converge_feller}
    \begin{split}
         &\left|\eee[f(\eta^{\phi,\nu_0}(t))]-\int_{ \mathcal{H}_s} f( \eta)\nu^\phi(d\eta)\right|\\
        \leq & \eee\left|f(\eta^{\phi,\nu_0}(t))-f(\eta^{\phi,0}(t))\right|+\left|\eee[f(\eta^{\phi,0}(t))]-\int_{ \mathcal{H}_s} f( \eta)\nu^\phi(d\eta)\right|\\
        =: & I_1(t)+I_2(t)
    \end{split}
\end{equation}
where $I_2(t)\rightarrow 0$ as discussed above, $I_1(t)$ is arbitrarily small. Indeed, 
\begin{equation}
\begin{split}
        I_1(t)&\leq C\cdot\ppp[\|\eta_0\|_\alpha\geq R] + \eee[\mathds{1}_{\{\|\eta_0\|_\alpha\leq R\}}\cdot f(\eta^{\phi,\nu_0}(t))-f(\eta^{\phi,0}(t))]\\
        &=: I_3(t)+I_4(t)
\end{split}
\end{equation}
where the constant $C$ in $I_3(t)$ is by the boundedness property of $f$. For arbitrary $\varsigma>0$, since $\eta_0\in \LL^2(\Omega;\hha)$, there exists $R>0$ such that 
$$\ppp[\|\eta_0\|_\alpha\geq R]<\varsigma. $$
We choose $R$ based on an arbitrary small $\varsigma$.
Note that $I_4(t)$ is restricted in a compact subspace and $f$ becomes uniformly continuous, by the property of $f$ and \eqref{E: init}, $I_4(t)\rightarrow 0$ as $\eps^{-2}t\rightarrow \infty$. 

We have seen that   $\nu^\phi(d\eta)$ is unique w.r.t. each $\phi$ with arbitrary initial distribution $\nu_0$, including $\delta_\eta$. The statement hence follows. 
\end{proof}

\begin{rem}\label{rem: disintegration}
Unlike the case where the linearized equations have no coupling effects, we are not able to explicitly solve the invariant measure for  $\{\eta(t)\}_{t\geq 0}$ by considering the transitions separately. 

The above proposition only provides a view that the marginal transition along $\hh_s$ quickly forgets the initial point $\eta$ for sufficiently small noise. In this view, we can represent an invariant measure by a disintegrated form
\begin{equation*}
    \begin{split}
       \mu(d\phi\times d\eta) &=\int_{[0, 2\pi]\times\hh_s}R(d\phi\;|\;\phi,\eta)\nu^\phi(d\eta)\mu(d\phi\times d\eta) \\
       &=:\tilde{\mu}(d\phi)\tilde{\nu}^\phi(d\eta).
    \end{split}
\end{equation*}
Note that   $\tilde{\mu}(d\phi)$ and $\tilde{\nu}^\phi(d\eta)$ are difficult to solve explicitly. This, however, motivates us to deliver an approximation in Section \ref{sec: lya_approx} of the invariant measure with the above disintegrated form.
\end{rem}

\subsection{Conditions on Uniqueness of Invariant Measure}
We have seen in Section \ref{sec: stable_trans} that the disintegration measure $\nu^\phi(d\eta)$ along $\hh_s$ uniquely exists given any $\phi$. Similarly, under condition that $G_c^\phi(G_c^\phi)^*\neq 0$ for all $\phi$ \cite{khas1967necessary}, for each $\eta$, the solution of  \begin{equation}
    d\phi=(\varepsilon^{-2}b_c^\cf+b_c^\qf)dt+\Gamma (\phi,\eta) dt-G_c^\phi(\phi,\eta)dW_t
\end{equation} 
admits a unique limit measure that is solved by  the associated Fokker-Plank equation  
\begin{equation}\label{E: fk}
    \frac{dp}{d\phi}\big(\varepsilon^{-2}b_c^\cf+b_c^\qf+\Gamma(\phi,\eta)\big)+\frac{1}{2}\frac{d^2p}{d\phi^2}\left[G_c^\phi(G_c^\phi)^*\right](\phi,\eta)=0,
\end{equation}
where $p$ is the density function.

However, as discussed in Remark \ref{rem: disintegration}, it will
not be enough to consider ergodicity or uniqueness of invariant measure for $\{\phi(t)\}_{t\geq 0}$ and $\{\eta(t)\}_{t\geq 0}$ separately. It is not sufficient to only suppose the full-rank property of $G_c^\phi$. We hence impose a set of extra stronger conditions to guarantee the uniqueness of the invariant measure. 
\begin{ass}\label{ass: full-rank}
For any $\alpha\in(0,1]$, we assume that the operator $G(u)$ is   invertible for each $u\in\hh_\alpha\setminus\{\zero\}$.
\end{ass}

The above assumption plays a role as the Lie algebra condition to guarantee the uniqueness of the $\{\trans_t\}_{t\geq 0}$ for the coupled linearized system. It is equivalent to verify the non-singular condition, i.e., we need $G(u)$ to be bounded from below in the following sense:
\begin{equation}
    \langle G(u)v,v\rangle_\alpha\geq m\|u\|_\alpha \|v\|, \;m>0, v\in\mathcal{V}.
\end{equation}
Combining with Assumption \ref{ass: G-2} and \ref{ass: A2-2}, it can be verified that $G(u)$ is holomorphic on $\hh_\alpha\setminus\{\zero\}$.  

\section{Asymptotic Approximation of the Top Lyapunov Exponent}\label{sec: lya_approx}

Motivated by Remark \ref{rem: disintegration},  we derive the asymptotic expansion of the invariant measure $\mu^\eps$ in this section for the approximation of the top Lyapunov exponent. By \cite[Theorem 9.25]{da2014stochastic}, we can show that for any test function $f\in C_b^2([0, 2\pi]\times\mathcal{H}_{s})$, the quantity $\trans_tf$ satisfies
\begin{equation*}
\begin{split}
      \lim\limits_{t\downarrow 0}\frac{\trans_tf(\phi,\eta)-f(\phi,\eta)}{t} =\lle f(\phi,\eta)
\end{split}
\end{equation*}
where
\begin{equation}\label{E: generator_polar}
    \lle=\frac{1}{\eps^2}\LLk_0+\LLk_1^\qf,
\end{equation}
and 
$$\LLk_0(\cdot)=\left[b_c^\cf\frac{\partial }{\partial \phi}+\as_s\eta \frac{\partial }{\partial \eta}\right](\cdot)$$
$$\LLk_1^\qf(\cdot)=\left[(b_c^\qf+\Gamma(\phi,\eta)\frac{\partial }{\partial \phi}\right](\cdot)+\frac{1}{2}\operatorname{tr}\left[
\frac{\partial^2(\cdot)}{\partial\eta^2}
G_sG_s^*+\frac{\partial^2(\cdot)}{\partial\phi^2}G_c^\phi (G_c^\phi)^*\right](\phi,\eta)$$

\begin{rem}
To simplify the notation, we use $\LLk^\eps$ and $\LLk_1$ in stead of $\lle$ and $\LLk_1^\qf$ in this section. 
\end{rem}

For any test function $f\in C^2([0, 2\pi]\times\hh_s)$ one should have 
$\langle\LLk^\eps f,\mu^\eps\rangle=0.$
We expand $\mu^\eps$ as \begin{equation}
    \mu^\eps=\mu_0+\eps^2\mu_1+\mathcal{O}(\eps^3), 
\end{equation}
then, respectively on the level $\mathcal{O}(\eps^{-2})$ and $\mathcal{O}(1)$, 
\begin{subequations}\label{E: expansion}
\begin{align}
& \langle\LLk_0f, \mu_0\rangle=0 \label{E: level0}\\
& \langle\LLk_0 f,\mu_1\rangle=-\langle \LLk_1f,\mu_0\rangle \label{E: level1}
\end{align}
\end{subequations}

We proceed to find the solutions to \eqref{E: expansion}. 
Here we adopt a method similar to \cite{sri2012almost} to evaluate the first order asymptotic expansion of the top Lyapunov exponent. The proofs related to solving \eqref{E: expansion} are completed in the Appendix.
\begin{prop}\label{prop: 4.1}
$\mu_0(d\phi\times d\eta)=\frac{d\phi}{2\pi}\delta_0(d\eta)$ is an ergodic measure for Eq.(\ref{E: level0}).
\end{prop}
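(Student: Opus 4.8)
The plan is to verify directly that $\mu_0(d\phi\times d\eta)=\frac{d\phi}{2\pi}\,\delta_0(d\eta)$ satisfies the stationarity equation \eqref{E: level0} and then that it is the \emph{unique} (hence ergodic) invariant measure for the degenerate generator $\LLk_0$. First I would observe that $\LLk_0$ acts only as a first-order transport operator: it rotates the phase $\phi$ at constant speed $b_c^\cf$ and drives $\eta$ along the linear flow $e^{t\as_s}\eta$, which by part (1) of Assumption \ref{ass: A2-2} contracts $\eta$ to $\zero$ exponentially fast. Thus the deterministic characteristic flow generated by $\LLk_0$ is $(\phi,\eta)\mapsto(\phi+b_c^\cf t\bmod 2\pi,\; e^{t\as_s}\eta)$, and $\mu_0$ is exactly the product of the normalized Lebesgue measure on the circle (invariant under rigid rotation) with the Dirac mass at the globally attracting fixed point $\zero$ of the $\eta$-dynamics. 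This already makes the invariance geometrically transparent.

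For the computational check I would take $f\in C^2([0,2\pi]\times\hh_s)$ and compute
\begin{equation*}
\langle \LLk_0 f,\mu_0\rangle=\int_0^{2\pi}\left[b_c^\cf\frac{\partial f}{\partial\phi}(\phi,0)+\as_s 0\cdot\frac{\partial f}{\partial\eta}(\phi,0)\right]\frac{d\phi}{2\pi}
= \frac{b_c^\cf}{2\pi}\int_0^{2\pi}\frac{\partial f}{\partial\phi}(\phi,0)\,d\phi = 0,
\end{equation*}
where the $\eta$-term vanishes because $\as_s\eta$ evaluated at $\eta=0$ is $0$ (equivalently, $\delta_0$ is supported at the fixed point), and the $\phi$-integral vanishes by periodicity of $f(\cdot,0)$. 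This establishes that $\mu_0$ solves \eqref{E: level0}.

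It remains to argue ergodicity, i.e. that $\mu_0$ is the only probability measure annihilated by $\LLk_0$. The key point is that any $\LLk_0$-invariant measure must be supported on the $\om$-limit set of the characteristic flow; since $e^{t\as_s}\eta\to\zero$ uniformly on bounded sets by Assumption \ref{ass: A2-2}(1), every trajectory has $\om$-limit set contained in the circle $\{\eta=\zero\}$, forcing the $\eta$-marginal to be $\delta_0$. Restricted to that circle the flow is the irrational-independent rigid rotation $\phi\mapsto\phi+b_c^\cf t$ with $b_c^\cf=b_1(\gamma_c)\neq0$, whose unique invariant probability measure is $\frac{d\phi}{2\pi}$; hence $\mu_0$ is unique, and uniqueness of the invariant measure is equivalent to ergodicity. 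The main obstacle I anticipate is making the ``support on the $\om$-limit set'' step rigorous in the infinite-dimensional setting: one must rule out invariant mass escaping to infinity in $\hh_s$ or concentrating on a nontrivial $\as_s$-invariant subset, which is handled by combining the uniform exponential decay from Assumption \ref{ass: A2-2}(1) with a Lyapunov/tightness argument (of the type already used in Lemma \ref{lem: bound}) to confine invariant measures to a compact set and then push them down to $\{\eta=\zero\}$; everything else is a short direct verification.
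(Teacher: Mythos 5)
Your proposal is correct and follows essentially the same route as the paper: the core of the paper's proof is exactly your direct computation $\langle \LLk_0 f,\mu_0\rangle=\frac{b_c^\cf}{2\pi}\int_0^{2\pi}\partial_\phi f(\phi,0)\,d\phi=0$, with the $\eta$-term vanishing on the support of $\delta_0$. Your additional uniqueness/ergodicity argument (contraction of $e^{t\as_s}$ forcing the $\eta$-marginal to $\delta_0$, then uniqueness of the invariant measure for the rigid rotation at speed $b_c^\cf\neq 0$) is sound and in fact supplies detail that the paper only asserts informally by remarking that $\LLk_0$ ``behaves like deterministic'' with $\eta(t)\to 0$.
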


 To solve \eqref{E: level1}, we first calculate R.H.S. of \eqref{E: level1} using the following lemma. 
\begin{lem}\label{lem: rhs}
\begin{equation}
    \begin{split}
     -\langle \LLk_1f,\mu_0\rangle=&\int_0^{2\pi}\frac{\partial \Gamma}{\partial\phi}(\phi,0)f(\phi,0)\frac{d\phi}{2\pi}-\int_0^{2\pi}\left\{f(\phi,\eta)\tr[(D_\phi G_c^\phi)(D_\phi G_c^\phi)^*]\right\}_{\eta=0}\frac{d\phi}{2\pi}\\
     &-\frac{1}{2}\int_0^{2\pi}\left\{\sum_{k,j\in\mathbb{Z}_s} [G_s G_s^*]_{kj} f_\eta''(\phi,\eta; e_k,e_j)\right\}_{\eta=0} \frac{d\phi}{2\pi},  
    \end{split}
\end{equation}
where $[G_s G_s^*]_{kj}:=\langle G_s G_s^*e_k,e_j\rangle$,  
and $f_\eta''(\phi,\eta; e_k,e_j)$  is the Fr\'{e}chet derivative w.r.t. $\eta$ along $e_k$ and $e_j$.
\end{lem}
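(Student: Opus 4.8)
We evaluate $\langle\LLk_1 f,\mu_0\rangle$ directly, exploiting the product form $\mu_0(d\phi\times d\eta)=\frac{d\phi}{2\pi}\,\delta_0(d\eta)$ from Proposition~\ref{prop: 4.1}, and then integrating by parts in the compact, periodic variable $\phi$ so as to transfer every $\phi$-derivative off $f$. Since the $\eta$-marginal of $\mu_0$ is the Dirac mass at $\zero$,
\begin{equation*}
\langle\LLk_1 f,\mu_0\rangle=\int_0^{2\pi}\bigl(\LLk_1 f\bigr)(\phi,0)\,\frac{d\phi}{2\pi},
\end{equation*}
so the whole computation reduces to the coefficients of $\LLk_1$ frozen at $\eta=0$. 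Recalling $\LLk_1 f=(b_c^\qf+\Gamma)\,\partial_\phi f+\frac{1}{2}\tr[f_{\eta\eta}\,G_sG_s^*]+\frac{1}{2}f_{\phi\phi}\,\tr[G_c^\phi(G_c^\phi)^*]$, I would handle the three contributions separately.

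For the drift, $b_c^\qf\int_0^{2\pi}\partial_\phi f(\phi,0)\frac{d\phi}{2\pi}=0$ by $2\pi$-periodicity of $\phi\mapsto f(\phi,0)$, while a single integration by parts (again with no boundary term, by periodicity) turns $\int_0^{2\pi}\Gamma(\phi,0)\,\partial_\phi f(\phi,0)\frac{d\phi}{2\pi}$ into $-\int_0^{2\pi}\frac{\partial\Gamma}{\partial\phi}(\phi,0)f(\phi,0)\frac{d\phi}{2\pi}$; after the overall sign this gives the first term of the claimed identity. The $\eta$-Hessian contribution needs no integration by parts at all: expanding the trace in the orthonormal basis $\{e_k\}_{k\in\mathbb{Z}_s}$ of $\hh_s$ and using that $G_sG_s^*$ and the Fr\'{e}chet Hessian $f_\eta''$ are symmetric yields exactly $\frac{1}{2}\int_0^{2\pi}\bigl\{\sum_{k,j\in\mathbb{Z}_s}[G_sG_s^*]_{kj}\,f_\eta''(\phi,\eta;e_k,e_j)\bigr\}_{\eta=0}\frac{d\phi}{2\pi}$, i.e. (up to sign) the third term.

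The remaining, and decisive, piece is the angular diffusion term $\frac{1}{2}\int_0^{2\pi} f_{\phi\phi}(\phi,0)\,\tr[G_c^\phi(G_c^\phi)^*](\phi,0)\frac{d\phi}{2\pi}$. The plan is to integrate by parts twice in $\phi$ so that no derivatives of $f$ remain, and then rewrite the resulting second $\phi$-derivative of the quadratic form $\tr[G_c^\phi(G_c^\phi)^*]$ in terms of $\tr[(D_\phi G_c^\phi)(D_\phi G_c^\phi)^*]$. This is where the hypotheses on $G$ come in: because $G''=0$ and $G(\zero)=\zero$ (Assumption~\ref{ass: G-2}), $G$ is linear, hence $\phi\mapsto G_c^\phi(\phi,0)=G_c^R(\phi,0)\sin\phi-G_c^I(\phi,0)\cos\phi$ is an explicit trigonometric combination (a constant plus a first harmonic in $2\phi$), which makes $\partial_\phi^2\tr[G_c^\phi(G_c^\phi)^*]$ computable in closed form; the relation between $\tr[G_c^\phi(G_c^\phi)^*]$, $\Xi$ and $\Gamma$ recorded in Remark~\ref{rem: rem} is convenient here, and periodicity lets one discard exact $\phi$-derivatives under the $\frac{d\phi}{2\pi}$-average. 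Collecting the three pieces with the leading minus sign then gives the stated formula.

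I expect this last step to be the only real work: one has to carry out the double integration by parts cleanly and keep precise track of which harmonics survive the $\phi$-average against $f(\cdot,0)$, so that only the $\tr[(D_\phi G_c^\phi)(D_\phi G_c^\phi)^*]$ contribution is left. The other three ingredients --- a periodicity argument, one integration by parts, and a trace expansion in the basis of $\hh_s$ --- are routine; the one point that deserves a moment's care is that the angular equation \eqref{E: stable_phi} is driven by the same noise as \eqref{E: stable_eta}, so one should confirm that the reduction at $\eta=0$ against $\mu_0$ does not secretly produce an additional mixed $\phi$--$\eta$ second-order term beyond those already displayed in $\LLk_1$.
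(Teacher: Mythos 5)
Your argument is essentially the paper's own proof: evaluate $\langle\LLk_1 f,\mu_0\rangle$ against the product measure $\frac{d\phi}{2\pi}\,\delta_0(d\eta)$, kill the $b_c^\qf$ term by periodicity, integrate by parts once in $\phi$ to produce the $\partial_\phi\Gamma$ term, expand the $\eta$-Hessian trace in the basis $\{e_k\}_{k\in\mathbb{Z}_s}$, and convert the angular diffusion term $\frac{1}{2}\partial_\phi^2 f\,\tr[G_c^\phi(G_c^\phi)^*]$ into the $\tr[(D_\phi G_c^\phi)(D_\phi G_c^\phi)^*]$ contribution. The one step you defer --- the double integration by parts and the identification of $\frac{1}{2}\partial_\phi^2\tr[G_c^\phi(G_c^\phi)^*]$ with $\tr[(D_\phi G_c^\phi)(D_\phi G_c^\phi)^*]$ under the $\phi$-average, using the linearity of $G$ --- is asserted in the paper's proof with no more detail than you give, so your proposal matches it in both route and rigor.
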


Observing the above, we try ansatz of the following form to match the R.H.S. of \eqref{E: level1}:
\begin{equation}\label{E: ansatz}
    \mu_1(d\phi\times d\eta)=\frac{d\phi}{2\pi}\kappa(\phi)\delta_0(d\eta)+\frac{d\phi}{2\pi}\frac{\partial^2\delta_0}{\partial\eta^2}(\chi(\phi), h)(d\eta),
\end{equation}
where $\kappa: [0, 2\pi]\rightarrow\R$, $\chi: [0, 2\pi]\rightarrow\hh_s$, and an arbitrary $h=\sum_{k\in\mathbb{Z}_s} \langle h,e_k\rangle e_k\neq 0$ (recall notation $\mathbb{Z}_s$ in Definition \ref{def: other_notation-2}) that can make the calculation simple. 

\begin{rem}
The expression $\frac{\partial^2\delta_0}{\partial\eta^2}(a, b)(d\eta) $ appears in the above ansatz is the measure on $\mathcal{H}_s$ in the sense of directional distribution, where $a$ and $b$ are the directions.
For Fr\'{e}chet differentiable test function $f(\phi,\eta)$, we define the  Fr\'{e}chet derivatives 
$$f'(\phi,\eta; a):=\frac{\partial f}{\partial \eta} (\phi,\eta)(a)\;\;\text{and}\;\;f''(\phi,\eta; a,b):=\frac{\partial^2f}{\partial\eta^2}(\phi,\eta)(a,b).$$
Then the distributional measure should satisfy
$$\left\langle f(\phi, \eta), \frac{\partial\delta_0}{\partial \eta}(a)(d\eta) \right\rangle= \left\langle \frac{\partial f}{\partial \eta}(\phi,\eta)(a),  \delta_0(d\eta) \right\rangle=-f'(\phi,0;a),$$
and, likewise,
$$\left\langle f(\phi, \eta), \frac{\partial^2\delta_0}{\partial \eta^2}(a,b)(d\eta) \right\rangle= f''(\phi,0;a,b).$$
\end{rem}

\begin{prop}\label{prop: 4.3}
Let $\kappa(\phi)$ and $\chi(\phi)$ be the notions in the  ansatz \eqref{E: ansatz}. Let $\chi_k(\phi):=\langle \chi(\phi),e_k\rangle$ for $k\in\mathbb{Z}_s$ and choose $ h=\sum_{k\in\mathbb{Z}_s}h_ke_k\in\hha$, where $h_k=\frac{1}{2^{|k|+2}(1-\rho_k)}$ and $\rho_k$ is the $k$-th eigenvalue defined in Assumption \ref{ass: A1-2}. Suppose $\kappa(\phi)$
solves 
\begin{equation}\label{E: kappa}
    -b_c^\cf\frac{\partial\kappa}{\partial\phi}(\phi)=\frac{\partial\Gamma}{\partial\phi}(\phi,0)-\tr[(D_\phi G_c^\phi)(D_\phi G_c^\phi)^*](\phi,0).
\end{equation}
and $\chi_k(\phi)$ solves 

\begin{equation}\label{E: zeta}
    \left(b_c^\cf\frac{\partial}{\partial\phi}-\rho_k+1\right)\chi_k(\phi)= -\sum_{j\in\mathbb{Z}_s}\left\{[G_s G_s^*]_{kj}\right\}_{\eta=0}
\end{equation}
for all $k\in\mathbb{Z}_s$.
Then,
$$\mu_1(d\phi,d\eta)=\frac{d\phi}{2\pi}\kappa(\phi)\delta_0(d\eta)+\frac{d\phi}{2\pi}\frac{\partial^2\delta_0}{\partial\eta^2}(\chi(\phi), h)(d\eta).$$
\end{prop}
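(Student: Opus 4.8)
The plan is to verify directly that the ansatz \eqref{E: ansatz}, with $\kappa$ and $\chi_k$ chosen to solve \eqref{E: kappa} and \eqref{E: zeta}, satisfies the balance equation \eqref{E: level1}, namely $\langle \LLk_0 f,\mu_1\rangle = -\langle \LLk_1 f,\mu_0\rangle$ for every test function $f\in C_b^2([0,2\pi]\times\hh_s)$. Since the right-hand side has already been computed in Lemma \ref{lem: rhs}, the core of the argument is to compute the left-hand side $\langle \LLk_0 f,\mu_1\rangle$ for the proposed $\mu_1$ and match it term by term against the three integrals appearing in Lemma \ref{lem: rhs}.

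First I would split $\mu_1$ into its two pieces. For the first piece $\frac{d\phi}{2\pi}\kappa(\phi)\delta_0(d\eta)$, I apply $\LLk_0 = b_c^\cf\partial_\phi + \as_s\eta\,\partial_\eta$ to $f$ and pair with the measure: the $\as_s\eta\,\partial_\eta f$ contribution vanishes at $\eta=0$, so only $b_c^\cf\partial_\phi f(\phi,0)$ survives, and integrating by parts in $\phi$ over the circle $[0,2\pi]$ (no boundary term by periodicity) moves the derivative onto $\kappa$, producing $-\int_0^{2\pi} b_c^\cf \kappa'(\phi)\,f(\phi,0)\,\frac{d\phi}{2\pi}$. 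By the defining ODE \eqref{E: kappa} this equals $\int_0^{2\pi}\big(\partial_\phi\Gamma(\phi,0) - \tr[(D_\phi G_c^\phi)(D_\phi G_c^\phi)^*](\phi,0)\big) f(\phi,0)\,\frac{d\phi}{2\pi}$, which is exactly the first two terms of $-\langle\LLk_1 f,\mu_0\rangle$.

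Next, for the second piece $\frac{d\phi}{2\pi}\frac{\partial^2\delta_0}{\partial\eta^2}(\chi(\phi),h)(d\eta)$, I use the defining property of the distributional measure, $\langle g(\phi,\eta),\frac{\partial^2\delta_0}{\partial\eta^2}(a,b)(d\eta)\rangle = g''(\phi,0;a,b)$, applied to $g=\LLk_0 f$. Here one needs the Fréchet calculus identity that the second $\eta$-derivative of $\as_s\eta\,\partial_\eta f$ along directions $a,b$, evaluated at $\eta=0$, produces a term $\langle\as_s a, \cdot\rangle$-type contribution, so that $(\LLk_0 f)''_\eta(\phi,0;\chi(\phi),h)$ decomposes into a $b_c^\cf\partial_\phi[f''_\eta(\phi,0;\chi(\phi),h)]$ part plus a part involving $f''_\eta(\phi,0;\as_s\chi(\phi),h)$ and $f''_\eta(\phi,0;\chi(\phi),\as_s h)$. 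Expanding $f''_\eta$ in the eigenbasis $\{e_k\}_{k\in\mathbb{Z}_s}$, writing $h=\sum h_k e_k$ with the prescribed $h_k=\frac{1}{2^{|k|+2}(1-\rho_k)}$, and integrating by parts in $\phi$, I collect the coefficient of each $f''_\eta(\phi,0;e_k,e_j)$; the operator acting on $\chi_k$ that emerges is precisely $b_c^\cf\partial_\phi - \rho_k + 1$ (the $+1$ and the $(1-\rho_k)^{-1}$ in $h_k$ are engineered to cancel the $\as_s h$ factor and normalize), so by \eqref{E: zeta} this matches $-\frac{1}{2}\int_0^{2\pi}\{\sum_{k,j}[G_sG_s^*]_{kj}f''_\eta(\phi,0;e_k,e_j)\}\frac{d\phi}{2\pi}$, the third term of Lemma \ref{lem: rhs}. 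Adding the two pieces gives $\langle\LLk_0 f,\mu_1\rangle = -\langle\LLk_1 f,\mu_0\rangle$, as required.

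I expect the main obstacle to be the bookkeeping in the second piece: correctly carrying the Fréchet second-derivative of the unbounded term $\as_s\eta\,\partial_\eta f$ through the distributional pairing with $\frac{\partial^2\delta_0}{\partial\eta^2}$, justifying termwise manipulation of the resulting series over $\mathbb{Z}_s$ (this is where the summability of $h_k$, and hence of $\chi_k$, against $\|e_k\|_\alpha$-weights matters, i.e.\ checking $h\in\hha$ and $\chi(\phi)\in\hh_s$ so the pairing is well-defined), and keeping track of which index is differentiated so that the operator coefficient on $\chi_k$ comes out as the exact left side of \eqref{E: zeta}. The periodicity-in-$\phi$ integration by parts and the vanishing of the drift at $\eta=0$ are routine; the delicate point is the infinite-dimensional directional-distribution algebra. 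I would also need to note that \eqref{E: zeta} is solvable on the circle — i.e.\ $b_c^\cf\partial_\phi - \rho_k + 1$ is invertible, which holds because $\operatorname{Re}(1-\rho_k) = 1 - a_k > 0$ for $k\in\mathbb{Z}_s$ so the operator has no periodic kernel — and similarly that \eqref{E: kappa} is solvable, which requires the right-hand side to integrate to zero over $[0,2\pi]$; this solvability condition I would verify using Remark \ref{rem: rem} (the identity $\Xi + i\Gamma = -e^{-2i\phi}(a+bi)^2$), which forces $\partial_\phi\Gamma(\phi,0) - \tr[(D_\phi G_c^\phi)(D_\phi G_c^\phi)^*](\phi,0)$ to have zero mean.
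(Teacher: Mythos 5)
Your proposal takes essentially the same route as the paper's proof: split $\mu_1$ into its two pieces, pair $\LLk_0 f$ with each via the distributional calculus for $\delta_0$ and its directional $\eta$-derivatives, integrate by parts in $\phi$ over the circle, and use \eqref{E: kappa}, \eqref{E: zeta} together with the specific choice of $h$ to match the three terms of Lemma \ref{lem: rhs}. Your added remarks on the solvability of \eqref{E: kappa} and \eqref{E: zeta} are not part of the paper's proof of the proposition but agree with the discussion that follows it.
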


By solving \eqref{E: kappa} and \eqref{E: zeta}, we are able to obtain the exact form of $\mu_1$ as in \eqref{E: ansatz}.

Given the assumptions on $G$, the terms $\frac{\partial\Gamma }{\partial \phi}$ and $ \;\tr[( G_c^\phi)(G_c^\phi)^*],\;\tr[(D_\phi G_c^\phi)(D_\phi(G_c^\phi)^*]$ are Lipschitz continuous in $\phi$, the existence of solutions is  guaranteed. 
Based on the differentiability assumption of $G$, the solution to \eqref{E: kappa} is 
\begin{equation}
    \kappa(\phi)=-\frac{1}{b_c^\cf}\left(\Gamma(\phi, 0)+\frac{1}{2}\tr[G_c^\phi (D_\phi G_c^\phi)^*+(D_\phi G_c^\phi) (G_c^\phi)^*](\phi,0)\right) + 1, 
\end{equation}
where the constant $1$ comes from the marginal integral over $\phi$. 
Note that \eqref{E: zeta} can also be represented as 
$$\left(b_c^\cf\frac{\partial}{\partial\phi}-\as_s+1\right)\chi(\phi)=-\sum_{k,j\in\mathbb{Z}_s}\left\{[G_s G_s^*]_{kj}\right\}_{\eta=0}<\infty, $$
since the operator on the L.H.S. is both unbounded in $\mathcal{S}$ and $\hh_s$, we have 
$$\chi(\phi)=-\left(b_c^\cf\frac{\partial}{\partial\phi}-\as_s+1\right)^{-1}\sum_{k,j\in\mathbb{Z}_s}\left\{[G_s G_s^*]_{kj}\right\}_{\eta=0}$$ well defined. 
For short, we let $G_k^R+iG_k^I=-\sum_{j\in\mathbb{Z}_s}\left\{[G_s G_s^*]_{kj}\right\}_{\eta=0}$, then
for each $k\in\mathbb{Z}_s$, 
$$b_c^\cf\frac{\partial
}{\partial \phi}\begin{bmatrix}
\chi_k^R\\ \chi_k^I
\end{bmatrix}
(\phi)-\begin{bmatrix}
a_k-1 & -b_k\\ b_k & a_k-1
\end{bmatrix}\chi_k(\phi)=\begin{bmatrix}
G_k^R\\ G_k^I
\end{bmatrix}, $$
and

\begin{equation}\label{E: chik}
    \begin{bmatrix}
\chi_k^R\\ \chi_k^I
\end{bmatrix}(\phi)=\begin{bmatrix}
a_k-1 & -b_k\\ b_k & a_k-1
\end{bmatrix}^{-1}\left(e^{\vartheta_k}\begin{bmatrix}
\cos(w_k) & -\sin(w_k)\\ \sin(w_k) & \cos(w_k)
\end{bmatrix}-\begin{bmatrix}
G_k^R\\ G_k^I
\end{bmatrix}\right)
\end{equation}
is the solution, where $\vartheta_k:=\frac{(a_k-1)\phi}{b_c^\cf}$ and $w_k:=\frac{(b_k-1)\phi}{b_c^\cf}$. Then 
\begin{equation}\label{E: chi_phi}
    \chi(\phi)=\sum_{k\in\mathbb{Z}_s}(\chi_k^R(\phi)+i\chi_k^I(\phi))e_k.
\end{equation}

Now that 
$ \lambda^\eps=\langle\Qs,\mu^\eps\rangle=\langle\Qs,\mu_0\rangle+\eps^2\langle\Qs,\mu_1\rangle+r(\eps),$
where $r(\eps)$ represents the remainder. By a similar argument as \cite[Section 3]{arnold1986asymptotic} and \cite[Lemma 4.3]{sri2012almost}, we show that $r(\eps)=\mathcal{O}(\eps^3)$.

\begin{prop}
For the generator $\LLk_\eps=\frac{1}{\eps^2}\LLk_0+\LLk_1$, there exists functions $F_0, F_1$ 
on $[0, 2\pi]\times\hh_s$ and functions $\tilde{f}_0,\tilde{f}_1$ 
that are independent of $ \mathcal{S}\times\hh_s$, such that the sequence of Poisson equations 
\begin{equation}
    \begin{split}
        \LLk_0F_0\quad\quad\;\;\quad&=\zeta-\tilde{f}_0\\
        \LLk_0F_1+\LLk_1F_0 &=\;\;\;\;-\tilde{f}_1\\
    \end{split}
\end{equation}
are satisfied. As a consequence, 
\begin{equation}
    \begin{split}
        r(\eps)=-\eps^{3}[\langle\LLk_1F_1,\mu^\eps\rangle+\langle \LLk_1(F_0+\eps^2 F_1),\mu_1\rangle-\langle \LLk_1F_1,\mu_0+\eps^2\mu_1\rangle].
    \end{split}
\end{equation}
\end{prop}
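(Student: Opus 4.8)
The plan is to run the perturbed-test-function (corrector) argument, as in \cite{arnold1986asymptotic} and \cite[Lemma~4.3]{sri2012almost}. The Furstenberg--Khasminskii formula \eqref{E: formula} writes $\lambda^\eps=\langle\zeta,\mu^\eps\rangle$ with $\zeta:=\Qs$, and $\mu^\eps$ is invariant for $\LLk^\eps=\frac{1}{\eps^2}\LLk_0+\LLk_1$; the idea is to ``invert'' $\LLk^\eps$ approximately on the observable $\zeta$. Concretely, I would look for a corrector of the form $\Psi^\eps:=\eps^2F_0+\eps^4F_1$ and constants $\tilde{f}_0,\tilde{f}_1$ depending only on $\qf$ (hence independent of $(\phi,\eta)\in[0,2\pi]\times\hh_s$), such that
\[
\LLk^\eps\Psi^\eps=\LLk_0F_0+\eps^2\big(\LLk_0F_1+\LLk_1F_0\big)+\eps^4\LLk_1F_1=\zeta-\tilde{f}_0-\eps^2\tilde{f}_1+\eps^4\LLk_1F_1.
\]
Matching the $\mathcal{O}(1)$ and $\mathcal{O}(\eps^2)$ coefficients produces exactly the stated Poisson hierarchy $\LLk_0F_0=\zeta-\tilde{f}_0$ and $\LLk_0F_1+\LLk_1F_0=-\tilde{f}_1$.

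The technical core is the solvability and the regularity of this hierarchy. The operator $\LLk_0=b_c^\cf\partial_\phi+\as_s\eta\,\partial_\eta$ is the generator of the deterministic semiflow $\Phi_t(\phi,\eta)=\big(\phi+b_c^\cf t\ (\mathrm{mod}\ 2\pi),\,e^{t\as_s}\eta\big)$, which by Proposition~\ref{prop: 4.1} is uniquely ergodic with invariant measure $\mu_0=\frac{d\phi}{2\pi}\otimes\delta_0$. The Fredholm alternative then forces $\tilde{f}_0:=\langle\zeta,\mu_0\rangle$, and a solution is represented by $F_0(\phi,\eta)=-\int_0^\infty(\zeta-\tilde{f}_0)\big(\Phi_t(\phi,\eta)\big)\,dt$: the integral converges because $e^{t\as_s}\eta\to\zero$ exponentially by Assumption~\ref{ass: A2-2}(1), so the integrand tends to the $\phi$-oscillation of $\Xi(\cdot,0)$, which has zero $\phi$-mean and a bounded primitive. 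The at-most-linear growth and $C^2$-regularity of $G$ with $G''=0$ (Assumption~\ref{ass: G-2}), combined with this $\eta$-contraction propagated through the time integral, place $F_0$ in a suitable $C_b^2$-type class (smooth and bounded in $\phi$, controlled growth and enough $\eta$-derivatives), which in particular puts it in the domains of $\LLk_1$ and of $\LLk^\eps$. Solvability of the second equation then forces $\tilde{f}_1:=-\langle\LLk_1F_0,\mu_0\rangle$, after which $F_1(\phi,\eta)=-\int_0^\infty(\LLk_1F_0+\tilde{f}_1)\big(\Phi_t(\phi,\eta)\big)\,dt$ is built the same way, closing the $\eta$-estimates with the transient dissipativity of Section~\ref{sec: stable_trans}.

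With the correctors in hand, I would identify the two constants with the coefficients appearing in the proposition. Pairing $\LLk_0F_0=\zeta-\tilde{f}_0$ against $\mu_0$ recovers $\tilde{f}_0=\langle\zeta,\mu_0\rangle$, and pairing it against $\mu_1$, using the level-one relation \eqref{E: level1} with $f=F_0$ and $\langle 1,\mu_1\rangle=0$ (the additive constant in $\kappa$ being fixed to enforce this, so that $\mu_0+\eps^2\mu_1$ carries unit mass to this order), gives $\tilde{f}_1=\langle\LLk_0F_0,\mu_1\rangle=-\langle\LLk_1F_0,\mu_0\rangle=\langle\zeta,\mu_1\rangle$; thus $\tilde{f}_0+\eps^2\tilde{f}_1=\langle\Qs,\mu_0\rangle+\eps^2\langle\Qs,\mu_1\rangle$ is precisely the two-term approximation of $\lambda^\eps$. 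Since $\Psi^\eps$ is an admissible test function and $\mu^\eps$ is $\LLk^\eps$-invariant, $\langle\LLk^\eps\Psi^\eps,\mu^\eps\rangle=0$; substituting the corrector identity into $r(\eps)=\langle\zeta,\mu^\eps-\mu_0-\eps^2\mu_1\rangle$ and discarding the pieces that vanish by \eqref{E: level0}--\eqref{E: level1} and by $\langle 1,\mu^\eps-\mu_0-\eps^2\mu_1\rangle=0$ leaves precisely the stated identity
\[
r(\eps)=-\eps^{3}\big[\langle\LLk_1F_1,\mu^\eps\rangle+\langle\LLk_1(F_0+\eps^2F_1),\mu_1\rangle-\langle\LLk_1F_1,\mu_0+\eps^2\mu_1\rangle\big].
\]
Finally $|r(\eps)|=\mathcal{O}(\eps^3)$ because every pairing in the bracket is bounded uniformly in $\eps$: the uniform second-moment bound $\sup_{t\ge 0}\eee\|\eta(t)\|_\alpha^2\le C$ of Lemma~\ref{lem: bound} controls $\mu^\eps$ uniformly, $\mu_0,\mu_1$ are fixed, and $\LLk_1F_0,\LLk_1F_1$ belong to the relevant test-function class by the regularity above.

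The step I expect to be the main obstacle is the middle one: solving the Poisson equations for $\LLk_0$ and, above all, checking that the resulting $F_0,F_1$ are genuine admissible test functions for $\LLk^\eps$ on the infinite-dimensional phase space $[0,2\pi]\times\hh_s$. Because $\Xi$, $\Gamma$ and the traces built from $G$ grow with $\|\eta\|_\alpha$, this growth must be tracked carefully through the time integrals and controlled via the estimates of Section~\ref{sec: stable_trans} (Lemmas~\ref{lem: bound} and~\ref{lem: dis}); it will also most likely be necessary to justify $\langle\LLk^\eps\Psi^\eps,\mu^\eps\rangle=0$ through a truncation argument in $\eta$ (as in the Yosida-approximation device used elsewhere in the paper) rather than by a direct integration by parts against $\mu^\eps$.
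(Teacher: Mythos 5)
Your corrector (perturbed test function) scheme is exactly the route the paper intends: the paper itself offers no written proof of this proposition beyond the appeal to \cite[Section 3]{arnold1986asymptotic} and \cite[Lemma 4.3]{sri2012almost} and the closing remark that boundedness of $\LLk_1F_0,\LLk_1F_1$ yields $|r(\eps)|=\mathcal{O}(\eps^3)$, so setting up the Poisson hierarchy for $\LLk_0$, identifying $\tilde{f}_0=\langle\zeta,\mu_0\rangle$ and $\tilde{f}_1=\langle\zeta,\mu_1\rangle$ by the Fredholm alternative, and using invariance of $\mu^\eps$ is the right skeleton. Two points in your write-up, however, are asserted rather than established. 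First, the final identity: carrying out your own bookkeeping (invariance of $\mu^\eps$ tested on $F_0$ and on $F_1$, the relations \eqref{E: level0}--\eqref{E: level1}, and your normalization $\langle 1,\mu_1\rangle=0$) gives $r(\eps)=-\eps^{4}\langle\LLk_1F_1,\mu^\eps\rangle$, not the bracketed $\eps^{3}$-expression of the proposition; moreover, under that same normalization the bracketed expression collapses (use \eqref{E: level1} with $f=F_1$ to rewrite $\langle\LLk_1F_0,\mu_1\rangle=\langle\LLk_1F_1,\mu_0\rangle$) to $-\eps^{3}\langle\LLk_1F_1,\mu^\eps\rangle$, which differs from your outcome by a power of $\eps$. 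So the claim that your computation ``leaves precisely the stated identity'' is not demonstrated and does not follow from the even-power expansion you set up; what your argument does deliver is the bound $|r(\eps)|\leq C\eps^{4}\leq C\eps^{3}$, which is all the paper uses downstream. Note also that your device of fixing the additive constant in $\kappa$ so that $\langle 1,\mu_1\rangle=0$ conflicts with the paper's explicit choice of the constant $1$ in $\kappa$, and the identification $\tilde{f}_1=\langle\zeta,\mu_1\rangle$ depends on which normalization is adopted.

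Second, the representation $F_0(\phi,\eta)=-\int_0^\infty(\zeta-\tilde{f}_0)(\Phi_t(\phi,\eta))\,dt$ does not converge as written: along the flow the integrand tends to $\Xi(\phi+b_c^\cf t,0)-\int_0^{2\pi}\Xi(\cdot,0)\frac{d\phi}{2\pi}$, a nonvanishing zero-mean periodic function of $t$, and the improper time integral of such a function oscillates without converging --- a bounded primitive is not enough. The standard repair is to split $F_0$ into the explicit periodic solution of $b_c^\cf\,\partial_\phi F=\zeta(\cdot,0)-\tilde{f}_0$ on the circle (solvable because the right-hand side has zero $\phi$-mean) plus the time integral of the difference $\zeta(\phi+b_c^\cf t,e^{t\as_s}\eta)-\zeta(\phi+b_c^\cf t,0)$, which does decay exponentially by Assumption \ref{ass: A2-2}(1) and the quadratic structure of $\Xi$ in $\eta$; the same decomposition is needed for $F_1$, and it is through this splitting (not the raw integral) that one gets the regularity and the boundedness of $\LLk_1F_0,\LLk_1F_1$ that the $\mathcal{O}(\eps^3)$ conclusion requires. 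With that repair, together with the truncation/approximation argument you already anticipate for justifying $\langle\LLk^\eps(\eps^2F_0+\eps^4F_1),\mu^\eps\rangle=0$ against the uniform moment bound of Lemma \ref{lem: bound}, your argument closes; as it stands, these two steps are the genuine gaps.
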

Given the boundedness  $\sup_{\phi,\eta}\{|\LL_1F_1|,|\LL_1F_0|\}= C$, we immediately have
$|r(\eps)|<\os(\eps^3)$. 
Eventually, combining with $\mu^\eps$, we have \begin{equation}
\begin{split}
        \lambda^\eps=&\langle\Qs,\mu_0\rangle+\eps^2\langle\Qs,\mu_1\rangle+\mathcal{O}(\eps^3)\\
        =&\frac{1}{2\pi}\int_0^{2\pi}\Qs(\phi,0)d\phi+\frac{\eps^2}{2\pi}\int_0^{2\pi}\Qs(\phi,0)\kappa(\phi)d\phi\\
        &+\frac{\eps^2}{2\pi}\int_0^{2\pi} \Qs''\left(\phi,0;\sum_{k\in\mathbb{Z}_s}(\chi_k^R(\phi)+i\chi_k^I(\phi))e_k,\sum_{k\in\mathbb{Z}_s}\frac{1}{2^{|k|+2}(1-\rho_k)}e_k\right)d\phi\\
       &+\mathcal{O}(\eps^3).
\end{split}
\end{equation}

\section{Conclusion}\label{sec: conclusions}
 In this paper, we
provide for the first time a derivation of an asymptotic expansion for the top
Lyapunov exponent for {SPDE}s with multiplicative noise when the parameter moves slowly through the deteriministic Hopf bifurcation point. Instead of obtaining a dimension reduction using homogenization, the formula of top Lyapunov exponent was provided explicitly. 
We prove the existence of invariant measure on the product space of the unit sphere  and the stable mode, and show the conditions for ergodicity.  The disintegrated form of invariant measure as in Remark \ref{rem: disintegration}  explains the long term dependence of the stable marginals on the unit sphere of the critical mode.  However, since it is  difficult to solve, 
we derive an asymptotic expansion
of the invariant measure of the disintegrate form and apply it in the Furstenberg–Khasminskii formula for the top
Lyapunov exponent.  

\section*{Acknowledge}
The authors acknowledge partial support for this work from National Sciences and Engineering Research Council~(NSERC) Discovery grant 50503-10802.

\appendix
\section*{Appendix: Proofs in Section \ref{sec: lya_approx}}
\textbf{Proof of Proposition \ref{prop: 4.1}:}
Note that $\LLk_0$ behaves like deterministic:   $\eta(t)\rightarrow 0$ due to the stable semigroup generated by $\as_s$. Rigorously, 
\begin{equation*}
    \begin{split}
        \langle\LLk_0 f, \mu_0\rangle & =\iint_{[0, 2\pi]\times \mathcal{H}_s}b_c^\cf\frac{\partial f}{\partial \phi}\frac{d\phi}{2\pi}\delta_0(d\eta)+\iint_{[0, 2\pi]\times \mathcal{H}_s}\as_s \eta\frac{\partial f}{\partial\eta}(\phi,\eta)\frac{d\phi}{2\pi}\delta_0(d\eta)\\
        &=\int_0 ^{2\pi}b_c^\cf\frac{df(\phi,0)}{2\pi}-0=0. 
    \end{split}
\end{equation*}
\qed

\noindent\textbf{Proof of Lemma \ref{lem: rhs}:} 
\begin{equation*}
    \begin{split}
        -\langle \LLk_1f,\mu_0\rangle =& -\left\langle[ b_c^\qf+\Gamma(\phi,\eta)]\frac{\partial}{\partial \phi}f,\; \frac{d\phi}{2\pi}\delta_0(d\eta)\right\rangle-\frac{1}{2}\left\langle\frac{\partial^2f}{\partial\phi^2}\tr[G_c^\phi (G_c^\phi)^*], \;\frac{d\phi}{2\pi}\delta_0(d\eta)\right\rangle\\
        &-\frac{1}{2}\left\langle\tr\left[\frac{\partial^2f}{\partial\eta^2}G_s G_s^*\right], \;\frac{d\phi}{2\pi}\delta_0(d\eta)\right\rangle\\
        =& \int_0^{2\pi}\frac{\partial \Gamma}{\partial\phi}(\phi,0)f(\phi,0)\frac{d\phi}{2\pi}-\int_0^{2\pi}\left\{f(\phi,\eta)\tr[(D_\phi G_c^\phi)(D_\phi G_c^\phi)^*]\right\}_{\eta=0}\frac{d\phi}{2\pi}\\
    &-\frac{1}{2}\int_0^{2\pi}\left\{\sum_{k,j\in\mathbb{Z}_s} [G_s G_s^*]_{kj} \left\langle \frac{\partial^2f}{\partial\eta^2} e_k,  e_j\right\rangle\right\}_{\eta=0} \frac{d\phi}{2\pi}\\
    =& \int_0^{2\pi}\frac{\partial \Gamma}{\partial\phi}(\phi,0)f(\phi,0)\frac{d\phi}{2\pi}-\int_0^{2\pi}\left\{f(\phi,\eta)\tr[(D_\phi G_c^\phi)(D_\phi G_c^\phi)^*]\right\}_{\eta=0}\frac{d\phi}{2\pi}\\
    &-\frac{1}{2}\int_0^{2\pi}\left\{\sum_{k,j\in\mathbb{Z}_s} [G_s G_s^*]_{kj} f_\eta''(\phi,\eta; e_k,e_j)\right\}_{\eta=0} \frac{d\phi}{2\pi}.
    \end{split}
\end{equation*}
\qed

\noindent\textbf{Proof of Proposition \ref{prop: 4.3}:}
For test function $f\in C^{1,2}([0, 2\pi]\times\hh_s)$, we have
\begin{equation}
    \begin{split}
        \left\langle\LLk_0 f,\; \frac{d\phi}{2\pi}\kappa(\phi)\delta_0(d\eta)\right\rangle = &\left\langle\left[b_c^\cf\frac{\partial }{\partial \phi}+\as_s\eta \frac{\partial }{\partial \eta}\right](f),\; \frac{d\phi}{2\pi}\kappa(\phi)\delta_0(d\eta)\right\rangle\\
        =&-\int_0^{2\pi}\left\{b_c^\cf\frac{\partial \kappa}{\partial\phi}(\phi)f(\phi,\eta)\right\}_{\eta=0}\frac{d\phi}{2\pi}\\
        =& \int_0^{2\pi}\frac{\partial \Gamma}{\partial\phi}(\phi,0)f(\phi,0)\frac{d\phi}{2\pi}\\
        &-\int_0^{2\pi}f(\phi,0)\tr[(D_\phi G_c^\phi)(D_\phi G_c^\phi)^*](\phi,0)\frac{d\phi}{2\pi}.
    \end{split}
\end{equation}
For each $k\in\mathbb{Z}_s$, 
\begin{equation}\label{E: derive1}
    \begin{split}
        & \left\langle\LLk_0 f,\; \frac{d\phi}{2\pi}\frac{\partial^2\delta_0}{\partial\eta^2}(\chi_k(\phi), h)(d\eta)\right\rangle = \left\langle b_c^\cf\frac{\partial f}{\partial \phi}+\as_s\eta \frac{\partial f}{\partial \eta},\;\frac{d\phi}{2\pi}\frac{\partial^2\delta_0}{\partial\eta^2}(\chi_k(\phi), h)(d\eta)\right\rangle\\
        = & -\left\langle\frac{\partial\delta_0}{\partial \eta}(h)(d\eta),\;-f_\eta'\left(\phi,\eta; b_c^\cf\frac{\partial\chi_k}{\partial\phi}(\phi)\right) \frac{d\phi}{2\pi}\right\rangle\\
        &-\left\langle\frac{\partial\delta_0}{\partial \eta}(h)(d\eta),\;[f_\eta''(\phi,\eta; \as_s\eta, \chi_k(\phi))+f_\eta'(\phi,\eta; \as_s
    \chi_k(\phi))]\frac{d\phi}{2\pi} \right\rangle\\
        = & \left\langle\delta_0(d\eta),\;-f_\eta''\left(\phi,\eta;\; b_c^\cf\frac{\partial\chi_k}{\partial\phi}(\phi),h)\right) \frac{d\phi}{2\pi}\right\rangle +
        \left\langle\delta_0(d\eta),\;f_\eta'''(\phi,\eta; \as_s\eta, \chi_k(\phi),h)\frac{d\phi}{2\pi} \right\rangle\\
        &+ \left\langle\delta_0(d\eta),\;[f_\eta''(\phi,\eta;\as_s h,\chi_k(\phi))+f_\eta''(\phi,\eta; \as_s\chi_k(\phi),h)]\frac{d\phi}{2\pi} \right\rangle\\
        = & -\int_0^{2\pi}\left\{f_\eta''\left(\phi,\eta;\; (b_c^\cf\frac{\partial}{\partial\phi}-\as_s)\chi_k(\phi),h\right)-f_\eta''(\phi,\eta;\chi_k(\phi),\as_s h)\right\}_{\eta=0} \frac{d\phi}{2\pi}.
    \end{split}
\end{equation}
By the hypothesis on $\chi_k(\phi)$ and $h$, the last line of the above can be expanded as
\begin{equation*}
    \begin{split}
       &-\sum_{j\in\mathbb{Z}_s}(1-\rho_j)h_j\int_0^{2\pi}\left\{f_\eta''\left(\phi,\eta;\; (b_c^\cf\frac{\partial}{\partial\phi}-\as_s)\chi_k(\phi),e_j)\right)+f_\eta''(\phi,\eta;\chi_k(\phi),e_j)\right\}_{\eta=0} \frac{d\phi}{2\pi}\\
       =& -\frac{1}{2}\int_0^{2\pi}\sum_{j\in\mathbb{Z}_s}\left\{[G_s G_s^*]_{kj}\right\}_{\eta=0}f''_\eta(\phi,0;e_k,e_j)\frac{d\phi}{2\pi}
    \end{split}
\end{equation*}
Combining this and \eqref{E: derive1}, we have
$$\left\langle\LLk_0 f,\; \frac{d\phi}{2\pi}\frac{\partial^2\delta_0}{\partial\eta^2}(\chi(\phi), h)(d\eta)\right\rangle =-\frac{1}{2}\int_0^{2\pi}\sum_{k,j\in\mathbb{Z}_s}\left\{[G_s G_s^*]_{kj}\right\}_{\eta=0}f''_\eta(\phi,0;e_k,e_j)\frac{d\phi}{2\pi}.$$
Thus, by Lemma \ref{lem: rhs}, we have 
$$ \left\langle\LLk_0 f,\; \mu_1\right\rangle= -\left\langle\LLk_1 f,\; \mu_0\right\rangle,$$
which completes the proof.\qed

\bibliographystyle{tfnlm}
\bibliography{ref.bib}

\end{document}